\documentclass[reqno, 12pt]{article}

\pdfoutput=1

\usepackage{enumerate}
\usepackage{latexsym}
\usepackage[centertags]{amsmath}
\usepackage{amsfonts}
\usepackage{amsthm}
\usepackage{amssymb,mathtools}
\usepackage{newlfont}
\usepackage{graphics}
\usepackage{color}
\usepackage{float}
\usepackage{diagbox}
\usepackage{tocloft}
\usepackage{titlesec}
\usepackage{booktabs,longtable,array}
\usepackage{extpfeil}
\usepackage{centernot}
\usepackage[pagebackref,colorlinks=true,linkcolor=blue,citecolor=red,urlcolor=blue]{hyperref}
\usepackage[linesnumbered,ruled,vlined]{algorithm2e}
\usepackage{url}
\usepackage[T1]{fontenc}
\usepackage{lmodern}
\usepackage{microtype}
\usepackage[nameinlink,noabbrev,capitalize]{cleveref}
\usepackage{rotating}
\usepackage{multirow}
\usepackage{extarrows}
\usepackage[sort,compress,numbers]{natbib}
\usepackage[utf8]{inputenc}
\usepackage{xcolor}
\usepackage{listings}
\usepackage{aliascnt}
\numberwithin{equation}{section}

\newtheorem{theorem}{Theorem}[section]
\newtheorem{proposition}[theorem]{Proposition}
\newtheorem{lemma}[theorem]{Lemma}
\newtheorem{corollary}[theorem]{Corollary}
\theoremstyle{definition}

\newtheorem{problem}[theorem]{Problem}

\allowdisplaybreaks[4]

\SetKwInput{KwInput}{Input}                
\SetKwInput{KwOutput}{Output}              

\DeclareMathOperator{\Vol}{Vol}
\DeclareMathOperator{\htop}{ht}

\DeclareMathOperator{\relint}{relint}

\DeclareMathOperator{\Ehr}{Ehr}

\newcommand{\hstar}{h^{*}}
\newcommand{\floor}[1]{\left\lfloor #1\right\rfloor}

\title{Unimodality for IDP Lattice Simplices of Prime Normalized Volume}

\author{Feihu Liu$^{\color{blue} \dag}$ and Zihao Zhang$^{\color{blue} \S}$
\\[2mm]
{\small $^{\color{blue} \dag}$ Center for Combinatorics, LPMC,}\\[-0.8ex]
{\small Nankai University, Tianjin 300071, P.R.~China}\\
{\small $^{\color{blue} \S}$ School of Mathematics and Statistics,}\\[-0.8ex]
{\small Beijing Institute of Technology, Beijing 102400, P.R.~China}\\
{\small {\color{blue} $^\dag$} Email address: liufeihu7476@163.com}\\
{\small {\color{blue} $^\S$} Email address: zihao-zhang@foxmail.com}\\
}

\date{\today}

\begin{document}

\maketitle

\begin{abstract}
Recently, Ferroni constructed a family of counterexamples to the well-known conjecture in Ehrhart theory stating that the $h^*$-polynomial of a lattice polytope with the integer decomposition property is unimodal. This raises the question of whether the $h^*$-polynomial of a lattice simplex with the integer decomposition property remains unimodal. In this note, we prove that every lattice simplex with the integer decomposition property and prime normalized volume has a unimodal $h^*$-polynomial. Furthermore, we establish several sufficient conditions for the unimodality of the $h^*$-polynomial of such simplices.
\end{abstract}

\noindent
\begin{small}
\emph{2020 Mathematics subject classification}: Primary 52B20; Secondary  05A20, 52B11.
\end{small}

\noindent
\begin{small}
\emph{Keywords}: Lattice simplex; Integer decomposition property; Ehrhart polynomial; $h^*$-polynomial; Unimodality; Volume.
\end{small}


\section{Introduction}

Let $M$ be a lattice, that is, a free abelian group of finite rank.
Let $M_{\mathbb{R}} \coloneqq M \otimes_{\mathbb{Z}} \mathbb{R}$. 
Here $M \otimes_{\mathbb{Z}} \mathbb{R}$ denotes the tensor product over the integer ring $\mathbb{Z}$ of the $M$ (that is, a $\mathbb{Z}$-module) and the real field $\mathbb{R}$ (also considered as a $\mathbb{Z}$-module). 
This algebraic operation is known as extension of scalars.

A \emph{lattice polytope} $P \subset M_{\mathbb{R}}$ is defined as the convex hull of a finite set of points in $M$. 
For a $d$-dimensional lattice polytope $P$, its \emph{lattice-point enumerator}
$$L_P(n)=|nP\cap M| \quad (n\geq 0)$$
counts the number of integer points in the $n$-th dilation $nP = \{ n\alpha : \alpha \in P \}$ of $P$.
Ehrhart~\cite{Ehr62} proved that the function $L_P(n)$ is a polynomial in $n$ of degree $d$ with constant term $1$.
This polynomial is called the \emph{Ehrhart polynomial} of $P$.

The \emph{Ehrhart series} of $P$ has the form
\[\Ehr_P(t)=\sum_{n\geq0}|nP\cap M|t^n=\frac{\hstar_P(t)}{(1-t)^{d+1}},\qquad \hstar_P(t)=\sum_{i=0}^d h_i^*(P)t^i.
\]
The numerator $h_P^*(z)$ is the \emph{$h^*$-polynomial} of $P$, and its coefficient vector is the \emph{$h^*$-vector}. 
The coefficients of $h^*$-polynomial are nonnegative integers \cite{Sta80}. 

For further background on lattice polytopes, we refer to several excellent books~\cite{BR15,Zie95} and the good surveys \cite{Bra16,FH24,Liu19}.

A finite sequence $(a_0,\ldots,a_d)$ is \emph{unimodal} if some $k$ satisfies $a_0\leq\cdots\leq a_k\geq\cdots\geq a_d$.
We say that $P$ possesses the \emph{integer decomposition property} (IDP) if, for any positive integer $n$, every lattice point $z \in n\mathcal{P} \cap M$ can be expressed as a sum $z = z_1 + \dots + z_n$, where each $z_i \in \mathcal{P} \cap M$. The polytope $\mathcal{P}$ is called \emph{very ample} if it satisfies the integer decomposition property for all sufficiently large $n$.

Our initial motivation for this paper comes from the following open problem.

\begin{problem}\label{OpenIDPUniom}
Let $P$ be a lattice polytope possessing the integer decomposition property. Is it true that $h^*$-polynomial is unimodal?
\end{problem}

This problem was proposed by Schepers and Van Langenhoven \cite{SVL13}. Recently, Ferroni \cite{Fer26} gave a family of counterexamples to this open problem. We list the related progress and results in chronological order as follows:
\begin{enumerate}
    \item In 1989, Stanley \cite{Sta89} posed a problem more general than \cref{OpenIDPUniom}: is it true that every standard graded Cohen-Macaulay domain has a unimodal (or log-concave) $h$-vector?
        
    \item In 1989, Hibi \cite{Hibi89} again recorded Stanley's problem and obtained partial results.
    
    \item In 1992, Hibi \cite{Hibi92} had conjectures that all reflexive polytopes had unimodal $h^*$-polynomials.
    
    \item In 2005, Musta\c{t}\v{a} and Payne \cite{MP05} gave a counterexample to Hibi's conjecture (above case~3 \cite{Hibi92}).
     In 2008, Payne \cite{Pay08} proved that Hibi's conjecture has a counterexample in every dimension $d\geq 6$.
    
    \item In 2006, Ohsugi and Hibi \cite{OH06} posed the following conjecture in the context of normal polytopes: If $P$ is Gorenstein and IDP, then $h^*$-polynomial of $P$ is unimodal.
    
    \item In 2013, Schepers and Van Langenhoven \cite{SVL13} explicitly posed \cref{OpenIDPUniom} in the context of lattice polytopes. They also obtained partial results; for example, \cref{OpenIDPUniom} holds for $d\leq 4$.
        
     \item In 2016, Braun's survey \cite{Bra16} again records \cref{OpenIDPUniom} and related developments.

     \item In 2022, Adiprasito, Papadakis, Petrotou, and Steinmeyer \cite{APPS22} proved that Gorenstein IDP polytopes are indeed $h^*$-unimodal.

     \item In 2024, Ferroni and Higashitani's \cite{FH24} survey once again records progress on \cref{OpenIDPUniom} and related problems.

     \item In 2025, Hofscheier, Kurylenko, and Nill \cite{HKuN25} constructed examples of IDP polytopes whose $h^*$-polynomials are not log-concave (log-concavity is stronger than unimodality).
         
     \item In 2026, Hofscheier, Kurylenko, and Nill \cite{HKuN26} provided an example of a very ample lattice polytope with a non-unimodal $h^*$-polynomial.
         
     \item In 2026, Ferroni \cite{Fer26} gave a family of counterexamples to \cref{OpenIDPUniom}.
\end{enumerate}

In fact, the motivation for our study is to ask whether \cref{OpenIDPUniom} holds for \textbf{lattice simplices}.
A $d$-dimensional lattice polytope with exactly $d+1$ vertices is called a $d$-dimensional \emph{lattice simplex}.
We define the \emph{normalized volume} $\operatorname{Vol}(P)$ of $P$ to be $d!$ times the standard volume, normalized such that a fundamental parallelepiped of the lattice $M_P$ has volume one.

The main result of this paper is the following theorem.

\begin{theorem}\label{thm:main}
Let $\Delta$ be a lattice simplex with the integer decomposition property. If its normalized volume is a prime number, then its
$h^*$-vector is unimodal.
\end{theorem}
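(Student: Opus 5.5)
We work with the standard description of a lattice simplex through its fundamental parallelepiped group. Write $\Delta=\operatorname{conv}(v_0,\dots,v_d)$ and let $\Lambda_\Delta$ be the group of points of $\mathbb{Z}^{d+1}$ of the form $\sum_i \lambda_i (v_i,1)$ with $\lambda_i\in[0,1)$. Its order is $|\Lambda_\Delta|=\Vol(\Delta)=p$. The $h^*$-polynomial is then $\sum_{x\in\Lambda_\Delta} t^{\htop(x)}$, where $\htop(x)=\sum_i\lambda_i$ is the last coordinate of $x$. Since $p$ is prime, $\Lambda_\Delta\cong\mathbb{Z}/p\mathbb{Z}$ is cyclic and generated by any nonzero element. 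Concretely, there are weights $q=(q_0,\dots,q_d)\in(\mathbb{Z}/p)^{d+1}$ such that the elements are
$$x_k=\Bigl(\bigl\{\tfrac{kq_i}{p}\bigr\}\Bigr)_i,\qquad k=0,\dots,p-1,$$
with $\htop(x_k)=\sum_i\{kq_i/p\}$. Note that $\sum_i q_i\equiv 0 \pmod p$.

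The first step is to translate IDP into a condition on heights. A simplex is IDP iff every element of $\Lambda_\Delta$ is a sum of height-one elements of $\Lambda_\Delta$, where the sum is taken in the cone, without reduction. So let $S=\{k:\htop(x_k)=1\}$. Then $h_1^*=|S|$, and IDP says that every nonzero $x_k$ can be written as $x_{k_1}+\dots+x_{k_m}$ in $\mathbb{R}^{d+1}$ with $k_j\in S$ and $m=\htop(x_k)$. In particular $x_{k_1}+\dots+x_{k_m}$ has all coordinates in $[0,1)$, so no coordinate carries during the addition.

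The second step is to exploit this. Because the group is cyclic of prime order and $\Delta$ is IDP, $S$ is nonempty. Fix $s\in S$ and reindex using the generator $x_s$, so that $S\ni 1$. The key claim I aim for is a \emph{chain structure}. Let $m_0$ be the largest integer with $\htop(x_{j})=j$ for $1\le j\le m_0$. These multiples $x_1,2x_1,\dots$ are obtained by adding $x_1$ without carrying, and each carry-free step raises the height by exactly one.

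More generally, I would show that the elements split into disjoint ``strings'' $a, a+x_s, a+2x_s,\dots$ along which the height increases by one until the first carry. This would follow from the decomposition structure: if $x=y+x_s$ with no carry, then $\htop(x)=\htop(y)+1$. Here IDP guarantees that each nonzero element sits at the top of such a carry-free string built from height-one elements.

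The main step is to convert this into unimodality, and I expect it to be the main obstacle. Two approaches seem possible.

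\emph{Injection approach.} Construct injections $\Lambda_{i}\to\Lambda_{i+1}$ for $i<k$ and $\Lambda_{i+1}\to\Lambda_{i}$ for $i\ge k$, where $\Lambda_i$ denotes the elements of height $i$. The natural candidate is $x\mapsto x+x_s$ when it is carry-free, where $s\in S$ is chosen depending on $x$. Showing that a single choice works uniformly is the hard part. Primality should help here because the orbits of the map $x\mapsto x+x_s$ all have length $p$, so there is one orbit, and the height function along this orbit is a piecewise-linear ``sawtooth''. Along this orbit, $\htop(x_{k+1})-\htop(x_k)=1-c_k$, where $c_k$ is the number of coordinates that wrap around at step $k$.

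\emph{Decreasing approach.} Alternatively, I would try to show that the $h^*$-vector is in fact nonincreasing from position $1$ onward, apart from $h_0^*=1$. That is, prove $h^*_{i}\ge h^*_{i+1}$ for all $i\ge1$, which is a strong but plausible consequence of IDP plus a cyclic group of prime order.

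I would attempt this second statement first. The plan is to map each height-$(i+1)$ element $x$ injectively to a height-$i$ element $x-x_s$, with $s\in S$ chosen as a summand of $x$ in an IDP decomposition. To make the choice canonical, take $s$ to be minimal in a fixed order. Injectivity would then be argued via the no-carry condition together with the cyclic structure: if two elements mapped to the same image, their difference would be a nonzero multiple of a generator with controlled coordinates, which contradicts primality. Should this fail, I fall back to the injection approach above.
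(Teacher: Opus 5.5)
\textbf{There is a genuine gap.} Your setup is fine: the box group, and the carry-free reformulation of IDP, which matches the paper's Lemma~\ref{lem:nocarry}. But the proposal never reaches unimodality, and the route you plan to try first aims at a false statement.

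\textbf{The decreasing approach targets a false claim.} You claim $h_i^*\ge h_{i+1}^*$ for all $i\ge1$. This fails for
$\Delta=\operatorname{conv}\{(-1,-3,-6),e_1,e_2,e_3\}\subset\mathbb{R}^3$. This simplex has normalized volume $11$, and its box group is generated by $x_1=\frac1{11}(1,1,3,6)$, the barycentric coordinates of the origin. Write $x_k$ for the $k$-th multiple. Then:
\begin{itemize}
\item $x_1,x_2,x_4$ have height $1$;
\item $x_3,x_5,x_6,x_8$ have height $2$;
\item $x_7,x_9,x_{10}$ have height $3$.
\end{itemize}
So $h^*=(1,3,4,3)$. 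The simplex is IDP, because the identities $x_3=x_1+x_2$, $x_5=x_1+x_4$, $x_6=x_2+x_4$, $x_8=2x_4$, $x_7=x_1+x_2+x_4$, $x_9=x_1+2x_4$ and $x_{10}=x_2+2x_4$ all hold in $\mathbb{R}^4$ without carries. Here $h_1^*<h_2^*$.

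A quick sanity check shows why your claim is too strong. Combined with the shifted symmetry described below, it would force $h_1^*=\cdots=h_r^*$, hence $r\mid p-1$. That fails for $p=11$, $r=3$.

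\textbf{The injection arguments are empty or unfinished.} Your injectivity argument for $x\mapsto x-x_s$ proves nothing. In $\mathbb{Z}/p$ every nonzero element is a multiple of every generator, so ``the difference is a multiple of a generator'' contradicts nothing. The injection approach stops exactly at the step that matters. The ``string'' structure, even if established, says nothing about how the sizes of the height layers compare.

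\textbf{The missing idea is where primality actually bites.} Each coordinate map $\chi_j\colon\Lambda_\Delta\to\mathbb{R}/\mathbb{Z}$ is a homomorphism out of a group of prime order, so it is either trivial or injective. The paper's proof then runs as follows.
\begin{itemize}
\item Drop the identically zero coordinates, i.e.\ pass to the face $F$ they span. This face is IDP and has $h^*_F=h^*_\Delta$.
\item On $F$, every nonzero element has full support. Hence $\operatorname{ht}(\lambda)+\operatorname{ht}(-\lambda)=r+1$ with $r=\dim F$.
\item This gives the shifted symmetry $h_i^*=h_{r+1-i}^*$ for $1\le i\le r$.
\item With symmetry, unimodality needs only two inputs. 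The first is $h_0^*\le h_1^*$, which holds for any nontrivial IDP polytope.
\item The second is a nonincreasing second half. The paper imports this from Adiprasito--Papadakis--Petrotou: $h^*_{\lfloor (r+1)/2\rfloor}\ge\cdots\ge h^*_r$ for every IDP polytope.
\item Reflecting through the symmetry gives the nondecreasing first half.
\end{itemize}
Your carry-free combinatorics on $\mathbb{Z}/p$ would have to reprove that tail inequality from scratch, and nothing in the proposal indicates how.
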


The paper is organized as follows. 
\cref{Section-2}, we state some preliminaries; in particular, the inequality result of Adiprasito--Papadakis--Petrotou\cite{APP25} will be used in the subsequent proofs.
In \cref{sec:box}, we prove some lemmas needed for the proofs of the main theorems.
In \cref{Section--4}, we prove \cref{thm:main} by using the result on the shifted symmetry (i.e., $h_i^*=h_{d+1-i}^*$ for $1\leq i\leq d$) of the coefficients of Higashitani \cite{Hig10}.
In \cref{Section--5}, we provide some sufficient conditions for \cref{OpenIDPUniom} to hold on lattice simplices.

\section{Preliminaries}\label{Section-2}

\subsection{Homogenization}

Suppose that $P$ has dimension $d$. For any chosen lattice point $v \in P \cap M$, we define the sublattice parallel to $P$ by
\[M_P \coloneqq M \cap \operatorname{span}_{\mathbb{R}}(P - v).
\]
The affine lattice restricted to the affine span $\operatorname{aff}(P)$ is given by $v + M_P$. Up to a translation by $-v$--(an operation that naturally preserves both the Ehrhart counting function and the integer decomposition property (IDP) of $P$)--we may assume without loss of generality that $P$ is full-dimensional in the vector space $M_P \otimes_{\mathbb{Z}} \mathbb{R}$. 
Throughout this paper, geometric notions such as the interior, boundary, and facets of a polytope are taken relative to its affine span. In particular, $\partial P$ denotes the relative boundary of $P$.

We define the \emph{normalized volume} $\operatorname{Vol}(P)$ to be $d!$ times the standard volume, normalized such that a fundamental parallelepiped of the lattice $M_P$ has volume one. Under this normalization, a simplex is said to be \emph{unimodular} if its normalized volume is exactly one; by convention, a single lattice point (regarded as a zero-dimensional simplex) has normalized volume one.

To algebraically encode the lattice points in dilations of $P$, we construct the cone over $P$ as follows:
\[C_P \coloneqq \mathbb{R}_{\geq 0} (P \times \{1\}).
\]
Associated with this cone are the ambient lattice $N_P \coloneqq \operatorname{span}_{\mathbb{R}}(C_P) \cap (M \oplus \mathbb{Z})$ and the affine semigroup $S_P \coloneqq C_P \cap N_P$. The final coordinate of an element in $S_P$ is referred to as its \emph{height}, which we denote by $\operatorname{ht}$. By definition, the elements of $S_P$ at height $n \in \mathbb{Z}_{\geq 0}$ are precisely the vectors of the form $(x, n)$, where $x \in nP \cap M$; notably, the only element at height zero is the origin. In this framework, the geometric condition that $P$ possesses the IDP is algebraically equivalent to the assertion that the additive semigroup $S_P$ is generated by its elements of height one.

\subsection{Elementary identities and inequalities}

For a lattice polytope $P$, let $\relint(P)$ denote its relative interior. 
The reciprocity theorem of Ehrhart and Macdonald is a classical result in polytope theory. 
For every positive integer $n$, we have 
\begin{align*}
L_P(-n)=(-1)^d\bigl|\relint(nP)\cap M\bigr|\qquad(n\geq1).
\end{align*} 
In particular, if $d$ is even, then $L_P(-1)\geq0$.
Its full general form is due to Macdonald \cite{Macdonald1971}; see also \cite[Theorem~4.1]{BR15}.
The leading coefficient of $L_P(n)$ is $\Vol(P)/d!$.

Expanding $(1-t)^{-d-1}$ and comparing coefficients gives
\begin{equation}\label{eq:binomial}
 L_P(n)=\sum_{i=0}^d h_i^*(P)\binom{n+d-i}{d}.
\end{equation}
For nonnegative $n$, terms with $i>n$ vanish. Evaluating at $n=0,1$
and comparing leading coefficients, respectively, gives
\begin{equation}\label{eq:basic}
 h_0^*(P)=1,\qquad
 h_1^*(P)=|P\cap M|-d-1,\qquad
 \sum_{i=0}^d h_i^*(P)=\Vol(P).
\end{equation}
For $d\geq1$, evaluate the polynomial identity \eqref{eq:binomial}
at $n=-1$. All terms with $i<d$ vanish and
$\binom{-1}{d}=(-1)^d$. Reciprocity therefore gives
\begin{equation}\label{eq:last}
 h_d^*(P)=|\relint(P)\cap M|.
\end{equation}
The \emph{Ehrhart degree} of $P$ is $s(P)=\deg\hstar_P(t)$.
We set $h_i^*(P)=0$ for $i>d$ and retain terminal zeros when discussing
the full $h^*$-vector.

\begin{theorem}{\em (Adiprasito--Papadakis--Petrotou; \cite[Corollary~2.2]{APP25})}\label{ext:tail}
If $P$ is an IDP lattice polytope of dimension $d$, then
\begin{equation}\label{eq:tail}
 h_{\floor{(d+1)/2}}^*(P)\geq\cdots\geq h_d^*(P)\geq0.
\end{equation}
\end{theorem}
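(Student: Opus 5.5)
The plan is to pass to commutative algebra and derive \eqref{eq:tail} from a Lefschetz-type property of a generic Artinian reduction. Let $\Bbbk$ be an infinite field, chosen of characteristic $2$ so that the anisotropy techniques are available later. Let $R=\Bbbk[S_P]$ be the semigroup ring, graded by height. Since $P$ is IDP, $R$ is a standard graded algebra, i.e.\ generated in degree one. It is also a normal domain of Krull dimension $d+1$, so it is Cohen--Macaulay by Hochster's theorem. Its Hilbert series is $\Ehr_P(t)$.

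By Danilov--Stanley, the canonical module $\omega_R$ is the ideal spanned by the monomials of $\relint(C_P)\cap N_P$. Ehrhart--Macdonald reciprocity, in the form already used for \eqref{eq:last}, gives
\[
\operatorname{Hilb}(\omega_R,t)=\frac{\sum_{i=0}^d h^*_{d-i}(P)\,t^{i+1}}{(1-t)^{d+1}} .
\]
Choose a generic linear system of parameters $\Theta=(\theta_1,\dots,\theta_{d+1})$ in degree one; this is possible because $R$ is standard graded and $\Bbbk$ is infinite. By the Cohen--Macaulay property, $\Theta$ is a regular sequence on $R$ and on $\omega_R$. Hence $A\coloneqq R/\Theta R$ has $\dim_\Bbbk A_i=h_i^*(P)$, and $\omega_R/\Theta\omega_R\cong\omega_A$ has $\dim_\Bbbk(\omega_A)_{j}=h^*_{d+1-j}(P)$ for $1\le j\le d+1$. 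Moreover $\omega_A$ is the graded Matlis dual of $A$, up to a shift.

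Inequality \eqref{eq:tail} says $h^*_{d+1-j}\le h^*_{d-j}$ for $1\le j\le d-\floor{(d+1)/2}=\floor{d/2}$. In the language of $\omega_A$, this says the dimensions of $(\omega_A)_j$ do not decrease from $j=1$ up to $j=\floor{d/2}+1$. The plan is therefore to find a linear form $\ell\in R_1$ such that multiplication $\ell\colon(\omega_A)_j\to(\omega_A)_{j+1}$ is injective for all $1\le j\le\floor{d/2}$. By Matlis duality this is equivalent to surjectivity of $\ell\colon A_{i}\to A_{i+1}$ for all $i\ge\lceil d/2\rceil$ (up to the exact index shift, which I would check carefully). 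This is a ``half'' weak Lefschetz property for $A$ in high degrees. Note that $A$ is standard graded because of IDP, and this is exactly where the hypothesis enters; without it the tail can fail to decrease.

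The main obstacle is proving this injectivity, since $A$ is not Gorenstein and there is no Poincaré duality to exploit directly. The first thing I would try is the Adiprasito--Papadakis--Petrotou anisotropy method.
\begin{itemize}
\item Work over a purely transcendental extension of $\mathbb{F}_2$ and take $\Theta$ and $\ell$ generic.
\item Use the embedding $\omega_R\hookrightarrow R$, valid because $R$ is a domain, to build a bilinear pairing $(\omega_A)_j\times(\omega_A)_j\to(\omega_A)_{d+1}$ through multiplication by $\ell^{\,d+1-2j}$, composed with a generic functional on the top degree. Here I would exploit that $R$ is generated in degree one, so that products of interior elements with degree-one elements are controlled.
\item Show this pairing is anisotropic in characteristic $2$ for $2j\le d+1$, via the standard differential-operator or ``squaring'' identity over generic coefficients. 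Anisotropy forces $\ell^{\,d+1-2j}$, and hence $\ell$ itself, to be injective on $(\omega_A)_j$ in the required range.
\end{itemize}
Once injectivity holds over $\Bbbk$, the dimension inequalities are characteristic-independent, because Hilbert functions do not depend on the field. This yields \eqref{eq:tail}, with nonnegativity of $h^*_d$ being automatic from \eqref{eq:last}.
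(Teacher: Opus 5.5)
\textbf{There is a genuine gap: the anisotropy step carries the whole theorem and is not proved.} The paper gives no proof of this statement; it imports it as a black box from \cite[Corollary~2.2]{APP25}.

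Your reduction is set up correctly. The Hilbert series of $\omega_R$ is right, as is $\dim_k(\omega_R/\Theta\omega_R)_j=h^*_{d+1-j}$. So is the translation of \eqref{eq:tail} into injectivity of $\ell\colon(\omega_A)_j\to(\omega_A)_{j+1}$ for $1\le j\le\floor{d/2}$ (dually, surjectivity of $\ell\colon A_i\to A_{i+1}$ for $\lceil d/2\rceil\le i\le d-1$). But the anisotropy of $x\mapsto\ell^{d+1-2j}x^2$ on $(\omega_A)_j$ is only asserted, and it is not a ``standard'' step. The Papadakis--Petrotou/APP squaring identity is proved for generic Artinian reductions of Stanley--Reisner rings of pseudomanifolds and cycles, and it uses their structure:
\begin{itemize}
\item squarefree monomials in vertex variables;
\item an independent block of l.s.o.p.\ coefficients attached to each vertex;
\item an explicit facet-sum formula for the degree functional that can be differentiated in those coefficients.
\end{itemize}
The Ehrhart ring of an arbitrary IDP polytope has none of this: its degree-one generators satisfy binomial relations, and no comparable formula is available for the functional on $(\omega_A)_{d+1}$. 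So the identity cannot simply be quoted, and this is exactly the step your sketch leaves open.

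\textbf{Your pairing also asks for more than you need.} It factors through the natural map $\iota\colon\omega_R/\Theta\omega_R\to R/\Theta R$. If $x\in\omega_R\cap\Theta R$, then $x\,\omega_R\subseteq\Theta R\cdot\omega_R=\Theta\omega_R$, because $\omega_R$ is an ideal. Hence every element of $\ker\iota_j$ lies in the radical of your pairing, and anisotropy in degree $j$ forces $\iota_j$ to be injective.

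From $0\to\omega_R\to R\to R/\omega_R\to 0$ and the $R$-regularity of $\Theta$, one gets $\ker\iota\cong\operatorname{Tor}^R_1(R/\omega_R,R/\Theta R)$. The ring $R/\omega_R$ is Gorenstein of dimension $d$, being the quotient of a Cohen--Macaulay domain by a canonical ideal. Therefore $\ker\iota_j\cong\ker\bigl(\theta_{d+1}\colon B_{j-1}\to B_j\bigr)$ with $B=(R/\omega_R)/(\theta_1,\dots,\theta_d)$.

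So your anisotropy claim contains, as a special case, a weak Lefschetz property for the boundary ring $R/\omega_R$ in degrees $\le\floor{d/2}$. In particular it would give $h^*_{d+1-j}\le h^*_j$ for $1\le j\le\floor{d/2}$. That is a boundary $g$-type inequality, different from \eqref{eq:tail}, and your sketch never addresses it. To complete the plan you would need an actual anisotropy or Lefschetz theorem in this non-face-ring setting, together with a proof of this boundary part. Alternatively, you need a pairing on $(\omega_A)_j$ that does not pass through $\iota$.
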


\begin{theorem}{\em (Stanley; \cite[Proposition~3.4]{Sta91})}\label{ext:stanley}
Let $P$ be an IDP lattice polytope. Let $s=s(P)$ be the Ehrhart degree of $P$.
For integers $i,j\geq1$ with $i+j<s$, we have
\begin{equation*}
h_1^*(P)+h_2^*(P)+\cdots +h_i^*(P) \leq\ h_{j+1}^*(P)+h_{j+2}^*(P)+\cdots +h_{j+i}^*(P).
\end{equation*}
In particular, if $s\geq3$, then $h_1^*(P)\leq h_2^*(P)$.
\end{theorem}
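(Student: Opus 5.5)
Although this is Stanley's result, quoted from \cite[Proposition~3.4]{Sta91}, I would prove it by passing to commutative algebra and reducing it to a Hopf-type dimension inequality. The argument is only a plan: the decisive step is not checked.

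\textbf{Step 1: translate the inequality.} Work over an algebraically closed field $k$ of characteristic zero. Since $P$ is IDP, the semigroup ring $R=k[S_P]$ is generated in degree one. By Hochster's theorem $R$ is a Cohen--Macaulay domain of Krull dimension $d+1$, and its Hilbert series is $\Ehr_P(t)=\hstar_P(t)/(1-t)^{d+1}$. Choose a generic linear system of parameters $\theta_1,\dots,\theta_{d+1}\in R_1$. Put $B=R/(\theta_1,\dots,\theta_d)$, which is a one-dimensional Cohen--Macaulay standard graded ring. Its Hilbert function is
\[
H_B(n)=h_0^*+h_1^*+\cdots+h_n^*,
\]
and it is strictly increasing until $n=s$, after which it is constant and equal to $\Vol(P)$. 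The desired inequality rewrites as
\[
H_B(i)-1\le H_B(i+j)-H_B(j), \quad\text{that is,}\quad H_B(i)+H_B(j)\le H_B(i+j)+1 \quad\text{for } i+j<s.
\]
Since $H_B(1)=h_0^*+h_1^*=h_1^*+1$, this is the form I would aim to prove.

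\textbf{Step 2: a Hopf-type inequality.} I would use the following classical lemma, which holds over an algebraically closed field. If $U$, $V$ are finite-dimensional spaces and $U\otimes V\to W$ is bilinear with no zero divisors (so $uv\neq0$ whenever $u,v\neq0$), then the span of the products satisfies $\dim(UV)\ge \dim U+\dim V-1$. The task is to find spaces of dimensions $H_B(i)$ and $H_B(j)$ whose multiplication has no zero divisors and lands in a space of dimension $H_B(i+j)$.

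\textbf{Step 3: where to find such spaces.} The quotient $B$ is not a domain, so I would not take $U=B_i$ directly. Instead I would use that a graded one-dimensional Cohen--Macaulay ring with nonzerodivisor $\theta_{d+1}$ has $\dim B_n=\dim\bigl(B/\theta_{d+1}^{\,m}B\bigr)_n$ in low degrees. I would realize $B_n$ inside a torsion-free object attached to the domain $R$. The first candidate is the degree-$n$ part of a general hyperplane-section ring, which by Bertini in characteristic zero corresponds to the coordinate ring of a reduced set of $\Vol(P)$ points in uniform position. The second candidate is the canonical module $\omega_R$, which embeds as a rank-one ideal of the domain $R$. In either model, multiplication $R_i\times R_j$ followed by the embedding into $\omega_R$ or into the function field has no zero divisors, because $R$ is a domain. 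Applying Hopf's lemma there and then passing to the quotient by the generic parameters should give $H_B(i)+H_B(j)-1\le H_B(i+j)$.

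\textbf{Main obstacle.} The hard step is the passage in Step 3: the no-zero-divisor property must survive modding out by $\theta_1,\dots,\theta_d$, in the range $i+j<s$. This is exactly where the hypothesis $i+j<s$ must enter, since only in that range is $H_B(i+j)<\Vol(P)$, so that the relevant degree is not yet saturated. My first attempt would be a uniform-position argument for the general hyperplane section of $\operatorname{Proj} R$ (the Harris-style uniform position lemma). The alternative would be a duality argument with $\omega_R/(\theta)\omega_R\cong \operatorname{Hom}_k(A,k)$, where $A=R/(\theta_1,\dots,\theta_{d+1})$.

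\textbf{Special case.} Taking $i=j=1$ when $s\ge3$ gives $h_1^*\le h_2^*$.
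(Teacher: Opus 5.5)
The paper gives no proof of this statement; it only quotes Stanley's result, so your plan has to stand on its own. Steps~1 and~2 are fine. With $B=R/(\theta_1,\dots,\theta_d)$ the claim is equivalent to $H_B(i+j)\ge H_B(i)+H_B(j)-1$ for $i+j<s$, and the Hopf lemma over an algebraically closed field is correct. The gap, which you flag yourself, is Step~3, and the mechanism proposed there does not work.

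Hopf's lemma applied in $R$, in its fraction field, or in $\omega_R\subseteq R$ only bounds dimensions of products of graded pieces of $R$ and $\omega_R$. That is, it is a statement about $H_R$ and $H_{\omega_R}$. Reducing modulo $\theta_1,\dots,\theta_d$ can kill such products, and nothing in your argument controls how many.

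The nonsingularity of $B_i\times B_j\to B_{i+j}$ that you need is not a formal consequence of $B$ being a reduced one-dimensional Cohen--Macaulay standard graded ring. Take four points on a line and a fifth point off it in $\mathbb{P}^2$. Then $H_B=(1,3,4,5,5,\dots)$, so $H_B(2)<2H_B(1)-1$; equivalently, the $h$-vector $(1,2,1,1)$ violates $h_1\le h_2$. Indeed, the line through the four points times a line through the fifth is zero in $B_2$. So your assertion that the hyperplane-section model has no zero divisors ``because $R$ is a domain'' is precisely the missing theorem. The duality $\omega_R/(\theta)\omega_R\cong\operatorname{Hom}_k(A,k)$ does not obviously supply it either: it naturally compares $h$-entries with their mirror images (as in $h_0+\dots+h_i\le h_s+\dots+h_{s-i}$), rather than giving superadditivity of $H_B$.

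What is missing is the uniform position principle. Work over $k=\mathbb{C}$. Bertini's theorem together with the Cohen--Macaulay property shows that $R/(\theta_1,\dots,\theta_{d-1})$ is the saturated homogeneous coordinate ring of an integral curve $C$. It also shows that $B$ is the homogeneous coordinate ring of a general hyperplane section $\Gamma$ of $C$, which is a reduced set of $e=\Vol(P)$ points. By Harris's uniform position lemma (characteristic $0$), every subset of $\Gamma$ with at most $H_B(m)$ points imposes independent conditions on forms of degree $m$.

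Now put $a=H_B(i)$ and $b=H_B(j)$, and suppose $a+b-1\le e$. Choose disjoint $\Gamma_1,\Gamma_2\subseteq\Gamma$ with $|\Gamma_1|=a-1$ and $|\Gamma_2|=b-1$, and a further point $p$. Let $\Gamma_0=\Gamma_1\cup\Gamma_2\cup\{p\}$. For each $q\in\Gamma_0$ there are forms $F$ of degree $i$ and $G$ of degree $j$ such that $FG$ vanishes on $\Gamma_0\setminus\{q\}$ but not at $q$. For example, if $q\in\Gamma_1$, take $F$ vanishing on $(\Gamma_1\setminus\{q\})\cup\{p\}$ and $G$ vanishing on $\Gamma_2$, both nonzero at $q$; the other cases are analogous. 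Hence $\Gamma_0$ imposes $a+b-1$ independent conditions in degree $i+j$.

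When $a+b-1>e$, take $|\Gamma_2|=e-a$ instead; the same construction then gives $H_B(i+j)=e$. In all cases $H_B(i+j)\ge\min(e,a+b-1)$. Since $H_B(i+j)<e$ for $i+j<s$, this is exactly the inequality you need, and it is where the hypothesis $i+j<s$ enters.

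Uniform position also shows that a nonzero element of $B_m$ vanishes at no more than $H_B(m)-1$ points of $\Gamma$. So your Hopf argument does go through inside $B$ when $a+b\le e+1$. But the nonsingularity comes from uniform position, not from $R$ being a domain, and the case $a+b>e+1$ must still be excluded as above.
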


This result of Stanley (\cref{ext:stanley}) is also recorded in \cite[Theorem~1.2]{HKN18}.
This paper extends the result to spanning lattice polytopes in \cite[Theorem~1.4]{HKN18}. We only need the IDP case.

\section{Some lemmas}\label{sec:box}

Let $\Delta = \operatorname{conv}(v_0, \ldots, v_d)$ be a $d$-dimensional lattice simplex. For each $j \in \{0, \dots, d\}$, we lift the vertex to height one by setting $w_j \coloneqq (v_j, 1)$, and we define the associated sublattice as $L \coloneqq \bigoplus_{j=0}^d \mathbb{Z} w_j$. Because the vertices $v_0, \ldots, v_d$ are affinely independent, their lifted counterparts $w_0, \ldots, w_d$ are linearly independent. Consequently, the linear map
\[W \colon \mathbb{R}^{d+1} \longrightarrow \operatorname{span}_{\mathbb{R}}(C_\Delta), \quad \lambda \longmapsto \sum_{j=0}^d \lambda_j w_j
\]
is a vector space isomorphism.

Using this isomorphism, we define the coordinate set corresponding to the fundamental parallelepiped of the cone $C_\Delta$ as
\begin{equation}\label{eq:box}
    \Lambda_\Delta \coloneqq \left\{ \lambda \in [0,1)^{d+1} \;\middle|\; W(\lambda) \in N_\Delta \right\}.
\end{equation}
The set $\Lambda_\Delta$ is naturally endowed with an abelian group structure via coordinatewise addition modulo $1$, which canonically identifies $\Lambda_\Delta$ with the quotient group $N_\Delta / L$. For any element $\lambda = (\lambda_0, \dots, \lambda_d) \in \Lambda_\Delta$, we define its \emph{height} and \emph{support} by
\[
    \operatorname{ht}(\lambda) \coloneqq \sum_{j=0}^d \lambda_j \quad \text{and} \quad \operatorname{supp}(\lambda) \coloneqq \{j \mid \lambda_j \neq 0\},
\]
respectively.

\begin{lemma}{\em (Hibi; \cite[Proposition 27.7]{Hibi92})}\label{lem:box}
The set $\Lambda_\Delta$ is a finite abelian group, and
\begin{equation}\label{eq:box-enumeration}
|\Lambda_\Delta|=\Vol(\Delta),\qquad \hstar_\Delta(t)=\sum_{\lambda\in\Lambda_\Delta}t^{\htop(\lambda)}.
\end{equation}
Its heights are integers between $0$ and $d$, and only the zero element has height zero.
\end{lemma}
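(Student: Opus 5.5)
The plan is to prove the generating-function identity first, by decomposing the semigroup $S_\Delta$ over the parallelepiped, and then read off the remaining assertions from it. In particular the cardinality statement will follow from \eqref{eq:basic} rather than from a separate determinant computation.

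\emph{Step 1: the group structure.} Consider the map $\phi\colon N_\Delta\to[0,1)^{d+1}$ sending $x=W(\mu)$ to the vector of fractional parts $(\{\mu_0\},\dots,\{\mu_d\})$. Since $L\subseteq N_\Delta$, the element $W(\{\mu\})=x-W(\lfloor\mu\rfloor)$ lies in $N_\Delta$, so $\phi$ lands in $\Lambda_\Delta$. The map $\phi$ is surjective onto $\Lambda_\Delta$ because $\phi(W(\lambda))=\lambda$. It is a homomorphism when $[0,1)^{d+1}$ carries addition modulo $1$. Its kernel is exactly the set of $x$ with all $\mu_j\in\mathbb{Z}$, namely $L$. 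Hence $\Lambda_\Delta\cong N_\Delta/L$. This group is finite because $L$ has full rank $d+1$ in the rank-$(d+1)$ lattice $N_\Delta$, as the $w_j$ are linearly independent.

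\emph{Step 2: the unique decomposition.} Every $x\in S_\Delta=C_\Delta\cap N_\Delta$ can be written $x=W(\mu)$ with all $\mu_j\ge 0$. Taking integer and fractional parts gives a unique expression
\[ x=W(\lambda)+\sum_j m_j w_j,\qquad \lambda\in\Lambda_\Delta,\ m_j\in\mathbb{Z}_{\ge0}. \]
Conversely, every such expression yields an element of $S_\Delta$. Each $w_j$ has height $1$, and the height of $W(\lambda)$ is $\sum_j\lambda_j=\htop(\lambda)$. Therefore
\[ \Ehr_\Delta(t)=\sum_{x\in S_\Delta}t^{\htop(x)}=\Bigl(\sum_{\lambda\in\Lambda_\Delta}t^{\htop(\lambda)}\Bigr)\frac{1}{(1-t)^{d+1}}. \]
This gives the formula for $\hstar_\Delta(t)$.

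\emph{Step 3: the height statements and the cardinality.} The height $\htop(\lambda)$ is the last coordinate of $W(\lambda)\in N_\Delta\subseteq M\oplus\mathbb{Z}$, so it is an integer. Since $0\le\lambda_j<1$, we get $0\le\htop(\lambda)<d+1$, so $\htop(\lambda)\le d$. Equality $\htop(\lambda)=0$ forces all $\lambda_j=0$. Finally, evaluating the formula at $t=1$ and using $\sum_i h_i^*=\Vol(\Delta)$ from \eqref{eq:basic} gives $|\Lambda_\Delta|=\Vol(\Delta)$.

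The only point needing care is the bookkeeping in Step 2: one must check that the decomposition is genuinely unique and that nonnegativity of $\mu$ corresponds exactly to $m_j\ge 0$. Both follow directly from the linear independence of the $w_j$.
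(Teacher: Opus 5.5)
Your proof is correct. The paper gives no argument of its own for this lemma; it only cites Hibi. What you wrote is the standard fundamental-parallelepiped tiling of $C_\Delta$ behind that citation, and you handle the group structure $\Lambda_\Delta\cong N_\Delta/L$, the integrality and range of heights, and the Ehrhart-series identity properly.

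Deriving $|\Lambda_\Delta|=\Vol(\Delta)$ by setting $t=1$ and invoking \eqref{eq:basic} is legitimate and not circular. That step rests on the stated fact that the leading coefficient of $L_\Delta(n)$ is the relative volume divided by $d!$, and your Step~2 already establishes the rational form of $\Ehr_\Delta(t)$ directly. The usual alternative is to compute the index $[N_\Delta:L]$ as $|\det(w_0,\dots,w_d)|$ in a basis of $N_\Delta$, which equals $d!$ times the relative volume of $\Delta$.
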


\cref{lem:box} is also recorded in \cite[Lemma~2.1]{Hig14}.

\begin{lemma}\label{lem:nocarry}
The simplex $\Delta$ possesses the integer decomposition property (IDP) if and only if every nonzero element $\lambda \in \Lambda_\Delta$ of height $n \coloneqq \operatorname{ht}(\lambda)$ admits a decomposition
\begin{equation}\label{eq:nocarry}
    \lambda = \mu^{(1)} + \cdots + \mu^{(n)} \quad \text{in } \mathbb{R}^{d+1},
\end{equation}
where $\mu^{(i)} \in \Lambda_\Delta$ and $\operatorname{ht}(\mu^{(i)}) = 1$ for all $1 \leq i \leq n$.
\end{lemma}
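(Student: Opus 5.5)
We prove the two directions separately. Throughout we use the standard description of $S_\Delta$ coming from the decomposition of the simplicial cone $C_\Delta$. Every element $x\in S_\Delta$ can be written uniquely as
\[
x=W(\lambda)+\sum_{j=0}^d c_j w_j,\qquad \lambda\in\Lambda_\Delta,\ c_j\in\mathbb{Z}_{\geq 0}.
\]
Here $\lambda$ is the vector of fractional parts of the coordinates of $W^{-1}(x)$, and its height is $\htop(x)-\sum_j c_j$. This holds because $N_\Delta\supseteq L$ and $w_0,\dots,w_d$ form a basis of $\operatorname{span}_{\mathbb{R}}(C_\Delta)$. We also use two small observations.
\begin{itemize}
\item[(i)] The elements of $S_\Delta$ of height one are exactly the $w_j$ together with the points $W(\mu)$ with $\mu\in\Lambda_\Delta$ and $\htop(\mu)=1$. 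Indeed, an element of height one either has $\lambda=0$ and a single $c_j=1$, or has all $c_j=0$ and $\htop(\lambda)=1$.
\item[(ii)] If $\lambda=\sum_i \mu^{(i)}$ holds in $\mathbb{R}^{d+1}$ with all $\mu^{(i)}\in[0,1)^{d+1}$, then the sum involves no carrying.
\end{itemize}

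($\Leftarrow$) Assume the decomposition condition holds. Since $S_\Delta$ is generated by the $w_j$ together with the points $W(\lambda)$, $\lambda\in\Lambda_\Delta$, it suffices to show that each $W(\lambda)$ is a sum of height-one elements of $S_\Delta$. For $\lambda\neq 0$ of height $n$, apply linearity of $W$ to \eqref{eq:nocarry}:
\[
W(\lambda)=\sum_{i=1}^n W(\mu^{(i)}),
\]
and each $W(\mu^{(i)})$ lies in $N_\Delta\cap C_\Delta=S_\Delta$ with height one. Therefore $S_\Delta$ is generated in height one, which is IDP by the homogenization discussion in \cref{Section-2}.

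($\Rightarrow$) Assume $\Delta$ is IDP, and let $\lambda\in\Lambda_\Delta$ be nonzero of height $n$. Then $W(\lambda)\in S_\Delta$ has height $n$. By IDP we can write $W(\lambda)=x_1+\cdots+x_n$ with each $x_i\in S_\Delta$ of height one. By (i), each $x_i$ is either some $w_j$ or some $W(\mu)$ with $\htop(\mu)=1$. Apply $W^{-1}$ to get
\[
\lambda=\sum_{i=1}^n W^{-1}(x_i),
\]
where every summand has nonnegative coordinates. Suppose some $x_i=w_j$. Then the $j$-th coordinate of the right side is at least $1$, while the $j$-th coordinate of the left side is $\lambda_j<1$. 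This is a contradiction. Hence every $x_i$ is of the form $W(\mu^{(i)})$ with $\mu^{(i)}\in\Lambda_\Delta$ and $\htop(\mu^{(i)})=1$, and the equality holds in $\mathbb{R}^{d+1}$, which is exactly \eqref{eq:nocarry}.

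The main point needing care is this coordinate comparison: it is what excludes the vertex generators $w_j$ and forces the equality to hold without carries, rather than only modulo $1$. The rest is bookkeeping with the unique decomposition of $S_\Delta$ over the parallelepiped.
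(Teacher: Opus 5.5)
Your proof is correct and follows essentially the same route as the paper. The key step is the same in both: nonnegativity of the summands together with $\lambda_j<1$ forces every summand into $[0,1)^{d+1}$. You apply it after classifying height-one elements as $w_j$ or $W(\mu)$ and ruling out the $w_j$, while the paper takes barycentric coordinates of the height-one points directly and bounds $0\le\mu^{(i)}_j\le\lambda_j<1$; your observation (ii) is never used and can be dropped.
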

\begin{proof}
Let $\lambda \in \Lambda_\Delta$ be a nonzero element with $\operatorname{ht}(\lambda) = n$. By definition, $W(\lambda) = (x, n)$ for some $x \in n\Delta \cap M$. Because $\Delta$ has the IDP, the cone element $(x, n)$ can be written as a sum of $n$ height-one lattice points in the cone; that is, $(x, n) = \sum_{i=1}^n (x_i, 1)$, where $x_i \in \Delta \cap M$. 
For each $1 \leq i \leq n$, let $\mu^{(i)}$ denote the barycentric coordinates of $x_i$ with respect to the vertices of $\Delta$. 

By construction, the components of $\mu^{(i)}$ are non-negative, sum to one (hence $\operatorname{ht}(\mu^{(i)}) = 1$), and satisfy $W(\mu^{(i)}) = (x_i, 1)$. Because the columns of the linear isomorphism $W$ are linearly independent, the relation $W(\lambda) = \sum_{i=1}^n W(\mu^{(i)})$ implies the exact equality $\lambda = \sum_{i=1}^n \mu^{(i)}$ in $\mathbb{R}^{d+1}$. Furthermore, since $\lambda \in \Lambda_\Delta$, we have $\lambda_j < 1$ for all coordinates $j$. The non-negativity of the coordinates $\mu_j^{(i)}$ thus forces $0 \leq \mu_j^{(i)} \leq \lambda_j < 1$. Consequently, $\mu^{(i)} \in [0,1)^{d+1}$, which implies $\mu^{(i)} \in \Lambda_\Delta$, thereby establishing \eqref{eq:nocarry}.

Conversely, suppose the decomposition condition holds. Let $z \in S_\Delta$ be an arbitrary element of the affine semigroup. By standard fundamental parallelepiped decomposition, $z$ can be expressed uniquely as
\[z = W(\lambda) + \sum_{j=0}^d a_j w_j,
\]
where $\lambda \in \Lambda_\Delta$ and $a_j \in \mathbb{Z}_{\geq 0}$. Applying the linear height function yields $\operatorname{ht}(z) = \operatorname{ht}(\lambda) + \sum_{j=0}^d a_j$. If $\lambda \neq 0$, we invoke \eqref{eq:nocarry} to decompose $\lambda = \sum_{i=1}^{\operatorname{ht}(\lambda)} \mu^{(i)}$, where each $\mu^{(i)} \in \Lambda_\Delta$ has height one. It follows that $W(\lambda) = \sum_{i=1}^{\operatorname{ht}(\lambda)} W(\mu^{(i)})$. Since $\mu^{(i)} \in \Lambda_\Delta$ and its mapped image is an integer vector, each $W(\mu^{(i)})$ constitutes a height-one element of $S_\Delta$. The generators $w_j$ are naturally height-one elements of $S_\Delta$ as well. Thus, $z$ is represented entirely as a sum of $\operatorname{ht}(\lambda) + \sum_{j=0}^d a_j = \operatorname{ht}(z)$ height-one elements. This confirms that $S_\Delta$ is additively generated by its height-one elements, proving that $\Delta$ has the IDP.
\end{proof}

Generation of $\Lambda_\Delta$ by its height-one elements as a group is not the assertion of \cref{lem:nocarry}. The equality in \eqref{eq:nocarry} must hold before reduction modulo integers, and the number of summands is prescribed by height.

\begin{lemma}\label{lem:initial}
Let $P$ be a lattice polytope possessing the integer decomposition property (IDP). Then either $h^*_P(t) = 1$, or $1 = h^*_0(P) \leq h^*_1(P)$. Furthermore, if an IDP simplex contains no lattice points other than its vertices, it is unimodular.
\end{lemma}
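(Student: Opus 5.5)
The plan is to argue directly from IDP at height two, and then to use the box description of \cref{lem:box} together with \cref{lem:nocarry} for the statement about simplices.

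For the first assertion, suppose $h^*_P(t)\neq 1$. I first note that the Ehrhart degree $s=s(P)$ satisfies $s\geq 1$. The aim is to show $h_1^*(P)=|P\cap M|-d-1\geq 1$, that is, that $P$ has a lattice point which is not a vertex. I will argue by contradiction: assume $P\cap M$ consists only of the vertices of $P$. Since $P$ has IDP, the semigroup $S_P$ is generated by the lifted vertices $(v,1)$. The plan is to show this forces $h^*_P=1$. In general a polytope all of whose lattice points are vertices need not be a simplex, so I will reduce to simplices through triangulations rather than argue on $P$ itself. I would take a lattice triangulation of $P$ using only the vertices. Each lattice point $z\in nP\cap M$ lies in $n\sigma$ for some simplex $\sigma$ of the triangulation. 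The IDP representation of $z$ as a sum of vertices need not lie in $\sigma$, so this route is awkward.

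The cleaner route is to prove the simplex statement first and then deduce the general one. For the general $P$, I avoid triangulations: $h^*_1(P)=0$ forces $h^*_P=1$ directly for IDP polytopes. I would argue this by contradiction through \cref{ext:stanley}-type reasoning, or more simply by noting that IDP implies that the $h^*$-polynomial is the $h$-polynomial of the standard graded algebra $K[S_P]$, which is generated in degree one. That algebra is Cohen--Macaulay by Hochster's theorem, since IDP makes $S_P$ normal. Its Artinian reduction is generated in degree one with $h_1=h_1^*(P)$. If $h_1=0$, the reduction is just $K$ in degree zero, so $h^*_P=1$. This handles the general first assertion cleanly, and I will present this commutative-algebra argument as the main step.

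For the simplex assertion, let $\Delta$ be IDP with no lattice points besides its vertices. Then $h^*_1(\Delta)=0$, so by the first part $h^*_\Delta=1$. Hence $\Vol(\Delta)=\sum h_i^*=1$ by \eqref{eq:basic}, so $\Delta$ is unimodular.

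As an alternative self-contained proof for simplices, I can use \cref{lem:box}. Height-one elements of $\Lambda_\Delta$ correspond to lattice points of $\Delta$ that are not vertices, since the vertices correspond to $0\in\Lambda_\Delta$ plus an integer shift. Therefore no vertex-free height-one box element exists. By \cref{lem:nocarry} every nonzero $\lambda$ must be a sum of height-one elements, so $\Lambda_\Delta=\{0\}$ and $\Vol(\Delta)=1$.

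The main obstacle is justifying the first part for general $P$ without extra machinery. If the algebra argument is considered too heavy, I would restrict it to the Artinian-reduction fact that $h$-vectors of standard graded Cohen--Macaulay algebras satisfy $h_1=0\Rightarrow h=1$.
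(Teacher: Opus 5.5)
Your argument is correct, but its main step takes a different route from the paper. The paper stays with lattice points. It first proves the simplex statement exactly as in your ``alternative'' paragraph: a nonzero element of $\Lambda_\Delta$ would, by \cref{lem:nocarry}, force a height-one element, i.e.\ a non-vertex lattice point, contradicting $h_1^*(\Delta)=0$. It then deduces the general assertion by counting. $h_1^*(P)=0$ means $|P\cap M|=d+1$, and a $d$-polytope has at least $d+1$ vertices, all of them lattice points. So $P$ is a simplex whose only lattice points are its vertices, hence unimodular with $h^*_P=1$.

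You instead prove the general assertion first via the Ehrhart ring. IDP makes $K[S_P]$ standard graded, and Hochster gives Cohen--Macaulayness (normality of $S_P=C_P\cap N_P$ holds by construction; IDP is what gives generation in degree one). Killing a linear system of parameters over an infinite field then leaves an Artinian standard graded algebra with Hilbert function $h^*$, so $h_1^*=0$ forces $h^*=1$. This is valid, and it shows the statement is an instance of a general fact about standard graded Cohen--Macaulay algebras. But it imports heavy machinery, and Cohen--Macaulayness is not even needed: if $h_1^*=0$, then $K[S_P]$ has Krull dimension $d+1$ and is generated by $d+1$ linear forms, so it is a polynomial ring. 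The paper's route is elementary and stays within the $\Lambda_\Delta$ framework used throughout.

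The obstacle you identified disappears once you use the count $|P\cap M|=d+1$ rather than the weaker condition that every lattice point is a vertex. Relatedly, your gloss that $h_1^*(P)\geq 1$ means $P$ has a non-vertex lattice point is right only for simplices: the unit square has $h_1^*=1$ and no such point. Your final argument does not depend on it.
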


\begin{proof}
Let $\Delta$ be an IDP simplex whose only lattice points are its vertices. By formula \eqref{eq:basic}, this implies $h^*_1(\Delta) = 0$. Suppose, for the sake of contradiction, that the fundamental parallelepiped contained a non-zero element. Because $\Delta$ has the IDP, \cref{lem:nocarry} would guarantee the existence of an element in $\Lambda_\Delta$ of height exactly one. However, this contradicts the fact that the number of height-one elements in $\Lambda_\Delta$ is precisely enumerated by $h^*_1(\Delta) = 0$. Consequently, the coordinate set $\Lambda_\Delta$ consists solely of the origin. It follows that the associated quotient group is trivial. Since the order of this group is equal to the normalized volume of $\Delta$, we conclude that the normalized volume is one, proving that $\Delta$ is unimodular.

Now consider an arbitrary $d$-dimensional lattice polytope $P$, and suppose $h^*_1(P) = 0$. Formula \eqref{eq:basic} dictates that $P$ contains exactly $d+1$ lattice points in total. Since every vertex of a lattice polytope is a lattice point, and any $d$-dimensional polytope must have at least $d+1$ vertices, $P$ is forced to be a simplex whose vertices constitute its entire set of lattice points. By \cref{lem:box}, we deduce that $P$ is a unimodular simplex, which yields $h^*_P(t) = 1$. Otherwise, if $h^*_1(P) \neq 0$, it must be a positive integer. Since $h^*_0(P) = 1$ by definition, this immediately establishes the inequality $1 = h^*_0(P) \leq h^*_1(P)$.
\end{proof}

\begin{lemma}\label{lem:faces}
Let $P$ be a lattice polytope possessing the integer decomposition property (IDP). Then every face of $P$ is also IDP with respect to its induced lattice. Furthermore, let $\Delta = \operatorname{conv}(v_0, \ldots, v_d)$ be a simplex, and for any nonempty subset $I \subseteq \{0, \ldots, d\}$, define the face $\Delta_I \coloneqq \operatorname{conv}(v_i \mid i \in I)$. The natural embedding obtained by appending zero coordinates induces a height-preserving bijection
\begin{equation}\label{eq:face-group}
    \Lambda_{\Delta_I} \cong \left\{ \lambda \in \Lambda_\Delta \;\middle|\; \lambda_j = 0 \text{ for } j \notin I \right\}.
\end{equation}
\end{lemma}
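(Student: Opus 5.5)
The statement has two parts. The first is that faces of IDP polytopes are IDP. The second is the height-preserving bijection \eqref{eq:face-group}. I would prove them in that order, working throughout in the cone picture of \cref{Section-2}.

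\textbf{Faces are IDP.} Let $F$ be a face of $P$ and let $z\in nF\cap M$. Since $nF\subseteq nP$, the IDP of $P$ gives $z=z_1+\cdots+z_n$ with $z_i\in P\cap M$. To show every $z_i$ lies in $F$, I would choose a linear functional $\ell$ and a constant $c$ with $\ell\le c$ on $P$ and $F=P\cap\{\ell=c\}$. Then
\[
nc=\ell(z)=\sum_{i=1}^n \ell(z_i)\le nc .
\]
Equality forces $\ell(z_i)=c$ for every $i$, so $z_i\in F\cap M$. The only subtlety is the induced lattice. The lattice points of $nF$ relative to the affine lattice of $F$ are exactly the points of $M$ lying in $nF$, because $M_F=M\cap\operatorname{span}(F-v)$. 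So the decomposition above is a decomposition in the right lattice, and $F$ is IDP.

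\textbf{The bijection \eqref{eq:face-group}.} Write $W_I$ for the map $\lambda\mapsto\sum_{i\in I}\lambda_i w_i$ attached to $\Delta_I$. Let $\iota$ extend $\lambda\in\mathbb{R}^{I}$ by zeros. By construction $W(\iota(\lambda))=W_I(\lambda)$. Also $\iota$ clearly preserves both membership in $[0,1)$ coordinates and height, so it remains to match the lattice conditions.

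I need $N_{\Delta_I}=N_\Delta\cap\operatorname{span}_{\mathbb{R}}(C_{\Delta_I})$. Both sides equal $(M\oplus\mathbb{Z})\cap\operatorname{span}_{\mathbb{R}}(C_{\Delta_I})$, since $\operatorname{span}(C_{\Delta_I})\subseteq\operatorname{span}(C_\Delta)$. With this in hand:
\begin{itemize}
\item If $\lambda\in\Lambda_{\Delta_I}$, then $W(\iota\lambda)=W_I(\lambda)\in N_{\Delta_I}\subseteq N_\Delta$, so $\iota\lambda\in\Lambda_\Delta$ and vanishes off $I$.
\item Conversely, if $\mu\in\Lambda_\Delta$ vanishes off $I$, then $W(\mu)$ lies in $\operatorname{span}(w_i: i\in I)=\operatorname{span}(C_{\Delta_I})$. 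Hence $W(\mu)\in N_{\Delta_I}$ and $\mu=\iota(\mu|_I)$ with $\mu|_I\in\Lambda_{\Delta_I}$.
\end{itemize}
The map $\iota$ is injective, so this gives the bijection. It is also a group homomorphism under coordinatewise addition modulo $1$.

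\textbf{Main obstacle.} Everything here is routine. The only step needing care is identifying the induced lattice of the face with the restriction of the ambient lattice. This does not depend on choosing a lattice point of $F$ for the translation, so it is the point worth stating explicitly.
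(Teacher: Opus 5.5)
Your proposal is correct and follows the paper's proof essentially step for step. The same supporting-functional argument forces every summand $z_i$ into $F$, and the bijection rests on the same observation that a lattice vector in $\operatorname{span}_{\mathbb{R}}\{w_i \mid i\in I\}$ lies in $N_\Delta$ exactly when it lies in $N_{\Delta_I}$ (you are only slightly more explicit than the paper about the induced lattice $M_F$ and about $\iota$ preserving the $[0,1)$ and height conditions).
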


\begin{proof}
Let $F$ be a proper face of $P$. By definition, there exists a linear functional $u$ and a constant $c \in \mathbb{R}$ such that $F = \{ x \in P \mid u(x) = c \}$, and the valid inequality $u(y) \leq c$ holds for all $y \in P$. Suppose $x \in nF \cap M$ for some positive integer $n$. Because $P$ has the IDP and $x \in nP \cap M$, there exists a decomposition $x = x_1 + \cdots + x_n$ where $x_k \in P \cap M$ for each $1 \leq k \leq n$. Evaluating the linear functional $u$ at $x$ yields
\[n c = u(x) = \sum_{k=1}^n u(x_k).
\]
Since $x_k \in P$, we have the upper bound $u(x_k) \leq c$ for all $k$. In order for the sum to equal $nc$, this inequality must be sharp; that is, $u(x_k) = c$ for every $k$. This forces every summand $x_k$ to lie in $F \cap M$. The conclusion that $F$ is IDP with respect to its induced lattice then follows naturally upon translating $F$ by one of its lattice vertices.

For the simplex case, consider the ambient lattice associated with the cone over $\Delta_I$, given by $N_I \coloneqq \operatorname{span}_{\mathbb{R}} \{w_i \mid i \in I\} \cap (M \oplus \mathbb{Z})$. Let $\lambda$ be a vector supported on $I$, and let $\tilde{\lambda}$ be the corresponding vector in $\mathbb{R}^{d+1}$ obtained by inserting zeros at all coordinates $j \notin I$. Under the linear isomorphism $W$, the image $W(\tilde{\lambda})$ belongs to $N_\Delta$ if and only if it belongs to $N_I$, as the linear combination is restricted to the generators $\{w_i \mid i \in I\}$. The operation of adding or removing these zero coordinates trivially leaves the height $\operatorname{ht}(\lambda)$ invariant, which establishes the canonical identification in \eqref{eq:face-group}.
\end{proof}

\begin{proposition}\label{prop:core}
Suppose that the normalized volume satisfies $\operatorname{Vol}(\Delta) > 1$. Define the index set $I$ by
\[I \coloneqq \bigcup_{\lambda \in \Lambda_\Delta} \operatorname{supp}(\lambda),
\]
and let $F \coloneqq \Delta_I$ be the corresponding face with dimension $r \coloneqq |I| - 1$. Then $r \geq 1$, and the following equalities hold:
\begin{equation}\label{eq:core}
    \Lambda_F \cong \Lambda_\Delta, \qquad h^*_F(t) = h^*_\Delta(t), \qquad \operatorname{Vol}(F) = \operatorname{Vol}(\Delta).
\end{equation}
Furthermore, no coordinate of $\Lambda_F$ is identically zero. If $\Delta$ possesses the integer decomposition property (IDP), then $F$ also possesses the IDP.
\end{proposition}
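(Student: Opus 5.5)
By construction, every $\lambda\in\Lambda_\Delta$ satisfies $\operatorname{supp}(\lambda)\subseteq I$. Hence $\lambda_j=0$ for all $j\notin I$ and all $\lambda\in\Lambda_\Delta$. The set on the right of \eqref{eq:face-group} is therefore all of $\Lambda_\Delta$. \cref{lem:faces} then gives a height-preserving bijection $\Lambda_F\cong\Lambda_\Delta$. It is a group isomorphism, because the embedding by zero coordinates commutes with coordinatewise addition modulo $1$.

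The other two equalities in \eqref{eq:core} then follow from \cref{lem:box} applied to both $F$ and $\Delta$. It gives $\Vol(F)=|\Lambda_F|=|\Lambda_\Delta|=\Vol(\Delta)$. It also gives $h^*_F(t)=\sum_{\lambda\in\Lambda_F}t^{\htop(\lambda)}=\sum_{\lambda\in\Lambda_\Delta}t^{\htop(\lambda)}=h^*_\Delta(t)$, since the bijection preserves heights.

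Next, $r\geq1$. Since $\Vol(\Delta)>1$, \cref{lem:box} gives a nonzero $\lambda\in\Lambda_\Delta$. I claim $|\operatorname{supp}(\lambda)|\geq2$. If the support were a single index $j$, then $\htop(\lambda)=\lambda_j\in(0,1)$. This contradicts \cref{lem:box}, which says heights are integers. Thus $|I|\geq2$, i.e.\ $r\geq1$.

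"No coordinate of $\Lambda_F$ is identically zero" is a restatement of the definition of $I$ via the bijection. For each $i\in I$ there is some $\lambda\in\Lambda_\Delta$ with $\lambda_i\neq0$. Its preimage in $\Lambda_F$ has the same nonzero $i$-th coordinate.

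Finally, IDP for $F$ is the first assertion of \cref{lem:faces}, since $F=\Delta_I$ is a face of $\Delta$. The case $I=\{0,\dots,d\}$, where $F=\Delta$, is trivial.

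I expect no real obstacle. The only point needing care is that the bijection of \cref{lem:faces} respects both heights and the group structure, so that \cref{lem:box} can be applied verbatim to $F$ with its induced lattice.
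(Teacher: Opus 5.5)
Your proposal is correct and follows essentially the same route as the paper: integrality of heights forces every nonzero element to have support of size at least two, giving $r\geq1$; \cref{lem:faces} identifies all of $\Lambda_\Delta$ with $\Lambda_F$, and \cref{lem:box} then yields the volume and $h^*$ equalities; the definition of $I$ and \cref{lem:faces} handle the remaining two claims. Your extra remark that this bijection is a group isomorphism, not merely height-preserving, makes explicit something the paper only asserts implicitly with ``$\cong$''.
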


\begin{proof}
The assumption $\operatorname{Vol}(\Delta) > 1$ ensures that the coordinate set $\Lambda_\Delta$ contains a nonzero element. Such an element cannot have a support of size exactly one; if it did, its height would lie in the open interval $(0, 1)$, which contradicts the requirement that the height of any element mapped into the lattice $N_\Delta$ must be an integer. Therefore, the support of any nonzero element in $\Lambda_\Delta$ must contain at least two indices, yielding $|I| \geq 2$ and consequently $r \geq 1$.

By the definition of $I$, every element $\lambda \in \Lambda_\Delta$ satisfies $\lambda_j = 0$ for all $j \notin I$. As a result, the natural bijection established in \eqref{eq:face-group} canonicaly identifies the entire group $\Lambda_\Delta$ with $\Lambda_F$. By \cref{lem:box}, this structural isomorphism directly yields the remaining two equalities in \eqref{eq:core}. Moreover, the construction of $I$ guarantees that for each retained index $i \in I$, there exists at least one element in $\Lambda_\Delta$ whose $i$-th coordinate is nonzero; hence, no coordinate of $\Lambda_F$ is identically zero. Finally, if $\Delta$ is IDP, the fact that $F$ is a proper or improper face of $\Delta$, combined with \cref{lem:faces}, guarantees that $F$ is also IDP.
\end{proof}

\section{Shifted symmetry and the prime-volume theorem}\label{Section--4}

\begin{lemma}\label{lem:inversion}
For any element $\lambda \in \Lambda_\Delta$, let $-\lambda \in \Lambda_\Delta$ denote its additive inverse in the coordinate group. Then
\begin{equation}\label{eq:inverse}
\operatorname{ht}(\lambda) + \operatorname{ht}(-\lambda) = |\operatorname{supp}(\lambda)|.
\end{equation}
In particular, if every nonzero element of $\Lambda_\Delta$ has full support, then the $h^*$-polynomial coefficients satisfy the symmetry condition
\begin{equation}\label{eq:shifted}
    h_i^*(\Delta) = h_{d+1-i}^*(\Delta) \qquad \text{for} \quad 1 \leq i \leq d.
\end{equation}
\end{lemma}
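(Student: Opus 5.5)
The plan is to compute the additive inverse in $\Lambda_\Delta$ coordinatewise and then sum. Fix $\lambda = (\lambda_0,\dots,\lambda_d) \in \Lambda_\Delta$. Since $\Lambda_\Delta \cong N_\Delta/L$ with addition taken coordinatewise modulo $1$, the inverse $-\lambda$ is the unique representative of $-\lambda \bmod \mathbb{Z}^{d+1}$ in $[0,1)^{d+1}$. So I will first verify that
\[
(-\lambda)_j = \begin{cases} 1-\lambda_j, & \lambda_j \neq 0,\\ 0, & \lambda_j = 0,\end{cases}
\]
and that this vector lies in $\Lambda_\Delta$. Membership holds because $W$ of this vector equals $-W(\lambda) + \sum_{j \in \operatorname{supp}(\lambda)} w_j$. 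Here $W(\lambda) \in N_\Delta$ and each $w_j \in N_\Delta$, so the sum is a lattice vector of $N_\Delta$. Its coordinates also lie in $[0,1)$, since $0<\lambda_j<1$ on the support.

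Summing the coordinates then gives
\[
\operatorname{ht}(\lambda)+\operatorname{ht}(-\lambda)=\sum_{j\in\operatorname{supp}(\lambda)}\bigl(\lambda_j+(1-\lambda_j)\bigr)=|\operatorname{supp}(\lambda)|,
\]
which proves \eqref{eq:inverse}.

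For the symmetry statement, assume every nonzero $\lambda$ has $|\operatorname{supp}(\lambda)| = d+1$. Then the involution $\lambda \mapsto -\lambda$ restricts to a bijection of $\Lambda_\Delta \setminus \{0\}$ onto itself: it is its own inverse, and it fixes $0$, so it preserves the nonzero elements. By \eqref{eq:inverse}, this bijection sends elements of height $i$ to elements of height $d+1-i$. By \cref{lem:box}, $h_i^*(\Delta)$ counts the elements of height $i$, and for $1 \le i \le d$ only nonzero elements contribute, since $0$ is the unique element of height $0$. Hence $h_i^*(\Delta) = h_{d+1-i}^*(\Delta)$ for $1 \le i \le d$. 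Note that $d+1-i$ also ranges over $\{1,\dots,d\}$, so no height-$0$ or height-$(d+1)$ terms are involved; indeed, heights never exceed $d$ by \cref{lem:box}.

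There is no real obstacle. The only point needing care is the coordinatewise description of the group inverse, specifically that coordinates with $\lambda_j = 0$ stay $0$ rather than becoming $1$. This is exactly why the support size, and not $d+1$, appears in \eqref{eq:inverse}.
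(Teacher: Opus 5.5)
Your proposal is correct and follows essentially the same route as the paper: compute the inverse coordinatewise ($1-\lambda_j$ on the support, $0$ off it), sum to get $|\operatorname{supp}(\lambda)|$, and then use the inversion bijection with \cref{lem:box} to match heights $i$ and $d+1-i$. Your explicit check that the reduced vector lies in $\Lambda_\Delta$, via $W(-\lambda) = -W(\lambda)+\sum_{j\in\operatorname{supp}(\lambda)} w_j$, is a small detail the paper leaves implicit.
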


\begin{proof}
Let $\lambda \in \Lambda_\Delta$. The inverse element $-\lambda$ in the group $\Lambda_\Delta$ is uniquely obtained by taking the additive inverse in $\mathbb{R}^{d+1}$ and reducing modulo $1$ coordinatewise. Therefore, the $j$-th coordinate of $-\lambda$ is $0$ if $\lambda_j = 0$, and $1 - \lambda_j$ if $\lambda_j > 0$ (which is precisely the condition that $j \in \operatorname{supp}(\lambda)$). Summing these relations over all coordinates $0 \leq j \leq d$ yields
\[\operatorname{ht}(\lambda) + \operatorname{ht}(-\lambda) = \sum_{j \in \operatorname{supp}(\lambda)} (\lambda_j + 1 - \lambda_j) + \sum_{j \notin \operatorname{supp}(\lambda)} (0 + 0) = |\operatorname{supp}(\lambda)|,
\]
which establishes \eqref{eq:inverse}.

For the second assertion, assume that every nonzero element of $\Lambda_\Delta$ has full support, meaning $|\operatorname{supp}(\lambda)| = d+1$ for all $\lambda \neq 0$. Under this hypothesis, \eqref{eq:inverse} simplifies to $\operatorname{ht}(\lambda) + \operatorname{ht}(-\lambda) = d+1$. The inversion map $\lambda \mapsto -\lambda$ is a group automorphism, hence a bijection on $\Lambda_\Delta$, and it restricts to a canonical bijection between the set of elements of height $i$ and the set of elements of height $d+1-i$. By \cref{lem:box}, the cardinality of the set of elements in $\Lambda_\Delta$ of height $k$ is given exactly by the coefficient $h_k^*(\Delta)$. Counting the elements in both height sets for $1 \leq i \leq d$ immediately yields \eqref{eq:shifted}.
\end{proof}

The following structural characterization reformulates the equivalence established in \cite[Theorem~2.1]{Hig10}.

\begin{proposition}{\em (Higashitani; \cite[Theorem~2.1]{Hig10})}\label{prop:equivalent}
Let $\Delta$ be a lattice simplex of dimension $d \geq 1$. Then the following conditions are equivalent:
\begin{enumerate}
    \item Every nonzero element $\lambda \in \Lambda_\Delta$ has full support.
    \item Every facet of $\Delta$ is unimodular with respect to its induced lattice.
    \item The $h^*$-polynomial coefficients of $\Delta$ satisfy the symmetry relations \eqref{eq:shifted}.
\end{enumerate}
\end{proposition}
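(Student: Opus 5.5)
The statement to prove is \cref{prop:equivalent}: for a $d$-simplex $\Delta$ with $d\geq 1$, conditions (1)–(3) are equivalent. I would prove $(1)\Leftrightarrow(2)$ directly from \cref{lem:faces} and \cref{lem:box}. Then I would get $(1)\Rightarrow(3)$ from \cref{lem:inversion}, and prove $(3)\Rightarrow(1)$ by a counting argument based on \eqref{eq:inverse}.

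For $(1)\Leftrightarrow(2)$, fix a facet $\Delta_I$ with $I=\{0,\dots,d\}\setminus\{j\}$. By \eqref{eq:face-group}, $\Lambda_{\Delta_I}$ is identified with the set of elements of $\Lambda_\Delta$ whose $j$-th coordinate vanishes. By \cref{lem:box}, the facet is unimodular exactly when this subgroup is trivial. So every facet is unimodular if and only if no nonzero $\lambda$ has $\lambda_j=0$ for any $j$, which is condition (1).

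The implication $(1)\Rightarrow(3)$ is exactly the second part of \cref{lem:inversion}.

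For $(3)\Rightarrow(1)$, I would compare the sum $\sum_{\lambda\in\Lambda_\Delta}\htop(\lambda)$ computed two ways. Pair each $\lambda$ with $-\lambda$ and sum \eqref{eq:inverse} over all $\lambda$. This gives
\[2\sum_{\lambda}\htop(\lambda)=\sum_{\lambda}|\operatorname{supp}(\lambda)|.\]
On the other hand, the sum equals $\sum_i i\,h_i^*(\Delta)$. Under (3), the involution $i\mapsto d+1-i$ on $\{1,\dots,d\}$ preserves the $h^*$-coefficients. Hence
\[2\sum_i i\,h_i^* = \sum_{i=1}^d \bigl(i+(d+1-i)\bigr)h_i^* = (d+1)\bigl(\Vol(\Delta)-1\bigr).\]
Therefore
\[\sum_{\lambda\neq0}|\operatorname{supp}(\lambda)|=(d+1)\bigl(|\Lambda_\Delta|-1\bigr).\]
Every term on the left is at most $d+1$, so each nonzero $\lambda$ must have full support, which is (1).

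The only delicate point is this last step: the shifted symmetry must be turned into exact information about supports. Using the total height as the invariant does this, so the counting argument should close without further obstacles. It also recovers Higashitani's theorem without quoting it.
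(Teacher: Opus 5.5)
Your proof is correct. It takes a genuinely different route from the paper, which gives no argument of its own: the paper imports the equivalence from \cite[Theorem~2.1]{Hig10} and presents condition~(1) as a reformulation of it. You instead stay entirely inside the coordinate-group framework the paper already sets up. Your $(1)\Leftrightarrow(2)$ comes from \eqref{eq:face-group} and \cref{lem:box} applied to each facet, and $(1)\Rightarrow(3)$ is \cref{lem:inversion}. Your $(3)\Rightarrow(1)$ comes from summing \eqref{eq:inverse} over all of $\Lambda_\Delta$; since you sum over all $\lambda$ rather than over pairs, self-inverse elements cause no trouble.

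This buys two things. First, the paper only ever uses the direction $(2)\Rightarrow(1)$ (in the proof of \cref{cor:clean}), together with \cref{lem:inversion}. Your short face-group argument supplies exactly that direction, so those results no longer depend on \cite{Hig10} or on checking that Higashitani's formulation really translates into condition~(1).

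Second, your counting proves more than the statement. Because $|\operatorname{supp}(\lambda)|\leq d+1$, one always has $2\sum_i i\,h_i^*(\Delta)\leq(d+1)\bigl(\Vol(\Delta)-1\bigr)$, with equality if and only if (1) holds. So this single linear relation already forces all of \eqref{eq:shifted}. Moreover, \eqref{eq:face-group} gives $\sum_{\lambda\in\Lambda_\Delta}\bigl(d+1-|\operatorname{supp}(\lambda)|\bigr)=\sum_{j=0}^{d}\Vol(\Delta_{\{0,\dots,d\}\setminus\{j\}})$. Hence your identity is the box-level form of the classical fact that the normalized facet volumes of a simplex sum to $(d+1)\Vol(\Delta)-2\sum_i i\,h_i^*(\Delta)$, which is the second-highest Ehrhart coefficient formula.
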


By a simple derivation, we obtain the following lemma.

\begin{lemma}\label{lem:reflection}
Let $a_0,\ldots,a_r$ be nonnegative numbers, $r\geq1$.
Suppose $a_0\leq a_1$, $a_i=a_{r+1-i}$ for $1\leq i\leq r$, and $a_q\geq\cdots\geq a_r$, where $q=\floor{(r+1)/2}$.
Then $a_0\leq\cdots\leq a_q\geq\cdots\geq a_r$.
\end{lemma}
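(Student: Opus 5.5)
We need to show: $a_0\le a_1\le\cdots\le a_q$ and $a_q\ge\cdots\ge a_r$. The second chain is a hypothesis, so only the increasing part needs work. The plan is to reflect the decreasing tail through the symmetry $a_i=a_{r+1-i}$.

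First we handle the indices $1\le i<q$ and show $a_i\le a_{i+1}$. Set $j=r+1-i$, so that $a_i=a_j$ and $a_{i+1}=a_{j-1}$. Since $i\le q-1$, we have
\[
j=r+1-i\ge r+2-q .
\]
We want $j-1\ge q$, that is, $r+1-q\ge q$, or $2q\le r+1$. This holds because $q=\floor{(r+1)/2}$. Also $j\le r$, because $i\ge1$. So both $j-1$ and $j$ lie in $\{q,\dots,r\}$, and the hypothesis $a_q\ge\cdots\ge a_r$ gives $a_{j-1}\ge a_j$. This is exactly $a_{i+1}\ge a_i$.

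Next, the first step $a_0\le a_1$ is assumed directly. Chaining it with the inequalities above gives $a_0\le a_1\le\cdots\le a_q$, and appending the hypothesis $a_q\ge\cdots\ge a_r$ yields the claimed unimodal shape.

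Finally, we check the degenerate cases. If $r=1$, then $q=1$, and the statement reduces to $a_0\le a_1$. If $q=1$ in general, the middle range is empty. In both cases nothing needs to be shown beyond the hypotheses.

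The only point requiring care is the index bookkeeping. One must confirm that the reflected indices $j-1$ and $j$ both land in the range $[q,r]$ where the decreasing hypothesis applies, using $2q\le r+1$. No real obstacle is expected.
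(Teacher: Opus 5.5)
Your proof is correct and is the straightforward reflection argument the paper intends; the paper writes out no proof and says only that the lemma follows ``by a simple derivation.'' The one index check you leave implicit is that $i+1\le r$, which is needed to apply the symmetry to $a_{i+1}$; it holds because $i+1\le q\le r$.
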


\begin{theorem}\label{thm:fullsupport}
Let $\Delta$ be a lattice simplex possessing the integer decomposition property (IDP). If every nonzero element of the coordinate group $\Lambda_\Delta$ has full support, then its $h^*$-vector is unimodal. Furthermore, this unimodality conclusion remains valid if the full-support condition is satisfied by the support face $F$ constructed in \cref{prop:core}.
\end{theorem}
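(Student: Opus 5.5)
The plan is to combine three inputs from the preceding sections: the shifted symmetry of \cref{lem:inversion}, the monotone tail of \cref{ext:tail}, and the initial inequality of \cref{lem:initial}. These feed into the elementary reflection argument of \cref{lem:reflection}. We first treat the case where every nonzero element of $\Lambda_\Delta$ has full support, with $d=\dim\Delta\geq 1$.

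\textbf{Trivial case.} If $\Vol(\Delta)=1$, then $\hstar_\Delta(t)=1$, whose coefficient vector $(1,0,\ldots,0)$ is unimodal. Assume therefore $\Vol(\Delta)>1$, so $h^*_\Delta(t)\neq 1$.

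\textbf{Collecting the hypotheses of \cref{lem:reflection}.} Set $a_i\coloneqq h_i^*(\Delta)$ for $0\leq i\leq d$ and $r\coloneqq d$.
\begin{enumerate}
\item Since $\Delta$ is IDP, \cref{lem:initial} gives $a_0=1\leq a_1$.
\item Since every nonzero element has full support, \cref{lem:inversion} gives $a_i=a_{d+1-i}$ for $1\leq i\leq d$.
\item \cref{ext:tail} applied to the IDP simplex $\Delta$ gives $a_q\geq\cdots\geq a_d$ with $q=\floor{(d+1)/2}$.
\end{enumerate}

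\textbf{Applying \cref{lem:reflection}.} This yields $a_0\leq\cdots\leq a_q\geq\cdots\geq a_d$, which is unimodality. If one wants \cref{lem:reflection} spelled out: for $1\leq i<j\leq q$, the indices $d+1-j<d+1-i$ both lie in $[q,d]$, because $d+1-q\geq q$. The tail monotonicity then gives $a_{d+1-j}\geq a_{d+1-i}$, and symmetry converts this to $a_j\geq a_i$. Together with $a_0\leq a_1$, this proves the increasing part.

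\textbf{Second assertion.} Now suppose only that the support face $F=\Delta_I$ of \cref{prop:core} has the full-support property; this requires $\Vol(\Delta)>1$, and otherwise we are in the trivial case. By \cref{prop:core}, $F$ is IDP, has dimension $r\geq1$, and satisfies $\Lambda_F\cong\Lambda_\Delta$ and $h^*_F=h^*_\Delta$. Applying the first part to $F$ shows that $(h_0^*(F),\ldots,h_r^*(F))$ is unimodal. The full $h^*$-vector of $\Delta$ is this sequence followed by $d-r$ zeros, and appending zeros to a unimodal sequence of nonnegative numbers preserves unimodality.

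\textbf{Main obstacle.} The only real point of care is index bookkeeping. The symmetry must be applied within the dimension of the face $F$ rather than that of $\Delta$, since with respect to $\Delta$ the symmetry fails whenever $r<d$. Likewise, \cref{ext:tail} must be invoked for $F$ with $q=\floor{(r+1)/2}$.
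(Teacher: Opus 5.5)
Your proposal is correct and takes essentially the same route as the paper. You dispose of the trivial case, combine \cref{lem:initial}, the shifted symmetry of \cref{lem:inversion} and the tail inequality of \cref{ext:tail} through \cref{lem:reflection}, and for the second assertion pass to the support face $F$ via \cref{prop:core} and pad with trailing zeros; your explicit check of \cref{lem:reflection} using $d+1-q\geq q$ simply fills in the ``simple derivation'' that the paper leaves to the reader.
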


\begin{proof}
If the group $\Lambda_\Delta$ is trivial, then $h^*_\Delta(t) = 1$, which is unimodal. Assume $\Lambda_\Delta$ is non-trivial. By \cref{lem:initial}, the first two coefficients satisfy the inequality $h_0^*(\Delta) \leq h_1^*(\Delta)$. The hypothesis that every nonzero element of $\Lambda_\Delta$ has full support allows us to use \cref{lem:inversion}, which establishes the shifted symmetry for the coefficients of $h^*_\Delta(t)$. By \cref{ext:tail} and \cref{lem:reflection}, the $h^*$-vector of $\Delta$ is unimodal.

For the second assertion, suppose the full-support condition holds for the support face $F$. By \cref{prop:core}, since $\Delta$ is IDP, the face $F$ inherits the IDP. We may therefore apply the first part of this theorem to $F$, concluding that the $h^*$-vector of $F$ is unimodal. The polynomial equality $h^*_F(t) = h^*_\Delta(t)$ established in \eqref{eq:core} implies that the $h^*$-vector of $\Delta$ is identical to the $h^*$-vector of $F$, up to the addition of trailing zeros. Therefore, the $h^*$-vector of $\Delta$ is unimodal.
\end{proof}

\begin{proof}[Proof of \cref{thm:main}]
Suppose that the normalized volume $p \coloneqq \operatorname{Vol}(\Delta)$ is a prime number. By \cref{prop:core}, we pass to the support face $F$, which satisfies $r \coloneqq \dim F \geq 1$. By the structural isomorphism in \eqref{eq:core}, the associated coordinate group $\Lambda_F$ has order $p$. Since $p$ is prime, $\Lambda_F$ is necessarily a cyclic group.

For each coordinate index $j$ of the face $F$, we define the natural projection homomorphism
\[\chi_j \colon \Lambda_F \longrightarrow \mathbb{R}/\mathbb{Z}, \quad \lambda \longmapsto \lambda_j \pmod{\mathbb{Z}}.
\]
\cref{prop:core} guarantees that no coordinate of $\Lambda_F$ is identically zero, which means that $\chi_j$ is a non-trivial homomorphism. The kernel $\ker(\chi_j)$ is a subgroup of $\Lambda_F$. Because the order of $\Lambda_F$ is prime, This dictates that $\ker(\chi_j)$ must be either the trivial subgroup or the entire group. As $\chi_j$ is non-trivial, it follows that $\ker(\chi_j) = \{0\}$.

Consequently, for every nonzero element $\lambda \in \Lambda_F$ and every index $j$, we have $\lambda_j \neq 0$. This demonstrates that all nonzero elements of $\Lambda_F$ have full support. Applying \cref{thm:fullsupport} concludes the proof, establishing the unimodality of the $h^*$-vector of $F$ and thereby that of the original simplex $\Delta$, correctly accounting for any terminal zeros.
\end{proof}

We note that the preceding argument applies seamlessly when $p=2$. 
In this scenario, the coordinate group $\Lambda_F$ has order two and thus contains a unique nonzero element, say $\lambda$. 
Since the face $F$ possesses the integer decomposition property, \cref{lem:nocarry} dictates that this element must have height $\operatorname{ht}(\lambda) = 1$. Furthermore, as an element of order two, $\lambda$ is its own additive inverse in $\Lambda_F$. Substituting $\lambda = -\lambda$ into \eqref{eq:inverse} yields an explicit support size:
\[|\operatorname{supp}(\lambda)| = \operatorname{ht}(\lambda) + \operatorname{ht}(-\lambda) = 1 + 1 = 2.
\]
Because $\lambda$ has full support on the $r$-dimensional face $F$, we simultaneously have $|\operatorname{supp}(\lambda)| = r+1$, which immediately forces $r=1$. Consequently, the support face $F$ is a one-dimensional lattice line segment of normalized volume two, and its associated $h^*$-polynomial is explicitly given by $h^*_F(t) = 1+t$.

\section{Extensions beyond prime volume}\label{Section--5}

In this section, we establish several sufficient conditions for the unimodality of the $h^*$-polynomial of IDP lattice simplices.

\begin{corollary}\label{cor:cyclic}
Let $\Delta$ be a lattice simplex possessing the integer decomposition property (IDP), and suppose that its associated coordinate group $\Lambda_\Delta$ is a non-trivial cyclic group of order $N$. Assume that a generator of $\Lambda_\Delta$ has coordinates of the form $a_j/N$, where $0 \leq a_j < N$ for each $j$, and that $\gcd(a_j, N) = 1$ whenever $a_j \neq 0$. Then the $h^*$-polynomial $h^*_\Delta(t)$ is unimodal.
\end{corollary}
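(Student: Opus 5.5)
The plan is to reduce to \cref{thm:fullsupport} by showing that, after passing to the support face $F$ of \cref{prop:core}, every nonzero element of $\Lambda_F$ has full support.

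\textbf{Step 1: the support of the generator.} Let $g=(a_0/N,\dots,a_d/N)$ be the given generator, with $0\le a_j<N$. Every element of $\Lambda_\Delta$ has the form $kg$ for some $0\le k<N$, with coordinates $\{ka_j/N\}$, the fractional parts. So if $a_j=0$, then coordinate $j$ vanishes on all of $\Lambda_\Delta$. Conversely, if $a_j\neq0$, then coordinate $j$ is nonzero on $g$ itself. Hence the index set $I$ of \cref{prop:core} is exactly $\{j : a_j\neq 0\}$. Since $N>1$, \cref{prop:core} applies: $F=\Delta_I$ is IDP, and $h^*_F(t)=h^*_\Delta(t)$ with $\Lambda_F\cong\Lambda_\Delta$ via deletion of the zero coordinates.

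\textbf{Step 2: full support on $F$.} Take a nonzero element $kg$ with $1\le k<N$, and an index $j\in I$. Its $j$-th coordinate is $\{ka_j/N\}$, which vanishes if and only if $N\mid ka_j$. Because $\gcd(a_j,N)=1$, this would force $N\mid k$, which is impossible. Therefore every nonzero element of $\Lambda_F$ has full support on $I$.

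Equivalently, in the language of the prime case, each character $\chi_j$ for $j\in I$ is injective, since $g$ is sent to an element of order exactly $N$. This identification is the only point needing care, because it is where the coprimality hypothesis enters.

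\textbf{Step 3: conclusion.} The second assertion of \cref{thm:fullsupport} now gives unimodality of $h^*_F$, and hence of $h^*_\Delta$, up to trailing zeros.

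There is no real obstacle here beyond Step 2. One only has to be careful that the coordinates $\{ka_j/N\}$ are read as honest representatives in $[0,1)$, which holds by the definition of $\Lambda_\Delta$.
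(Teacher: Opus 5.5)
Your proposal is correct and follows the paper's proof essentially step for step. You pass to the support face $F$ via \cref{prop:core}, use $\gcd(a_j,N)=1$ to see that $N\mid ka_j$ forces $N\mid k$ (so every nonzero multiple $kg$ with $1\le k<N$ has full support on $F$), invoke \cref{thm:fullsupport}, and your explicit check that $I=\{j : a_j\neq 0\}$ only spells out a detail the paper leaves implicit.
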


\begin{proof}
By \cref{prop:core}, we effectively eliminate the identically zero coordinates by support-face reduction. On this support face $F$, the generator restricts to coordinates $a_j/N$ where $a_j \neq 0$ for all retained indices $j$. By hypothesis, this guarantees that $\gcd(a_j, N) = 1$ for every coordinate of the generator on $F$.

For any integer $1 \leq k < N$, consider the element obtained by taking the $k$-th multiple of the generator in the group $\Lambda_F$. The $j$-th coordinate of this element reduces to zero modulo $1$ if and only if $k (a_j/N) \in \mathbb{Z}$, which is equivalent to $N \mid k a_j$. Because $a_j$ and $N$ are coprime, this condition forces $N \mid k$. However, this contradicts the bounds $1 \leq k < N$.

Consequently, for any $1 \leq k < N$, no coordinate of the $k$-th multiple can vanish. This proves that every nonzero element of the coordinate group $\Lambda_F$ has full support. The unimodality of the $h^*$-vector then follows directly from \cref{thm:fullsupport}.
\end{proof}

We define a positive-dimensional lattice simplex to be \emph{clean} if its only boundary lattice points are its vertices.

\begin{corollary}\label{cor:clean}
Let $\Delta$ be a positive-dimensional lattice simplex possessing the integer decomposition property (IDP). Then $\Delta$ is clean if and only if every facet of $\Delta$ is unimodular. Furthermore, if $\Delta$ is clean, its $h^*$-vector is unimodal.
\end{corollary}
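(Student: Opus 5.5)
The plan is to prove the equivalence through the coordinate group $\Lambda_\Delta$, using \cref{lem:nocarry} and \cref{lem:faces}, and then obtain unimodality from \cref{prop:equivalent} together with \cref{thm:fullsupport}.

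\emph{Unimodular facets imply clean.} Assume every facet is unimodular. Take a boundary lattice point $x$. It lies in some facet $\Delta_I$ with $|I|=d$. By \cref{lem:faces}, $\Lambda_{\Delta_I}$ is identified with the elements of $\Lambda_\Delta$ supported on $I$. Unimodularity of the facet means $\Lambda_{\Delta_I}$ is trivial. Write $(x,1)=W(\mu)$ with $\mu$ the barycentric coordinates of $x$, which are supported on $I$. The fractional part of $\mu$ lies in $\Lambda_{\Delta_I}$, so it is zero. Since $\mu\in[0,1]^{d+1}$ and $\sum_j\mu_j=1$, an integral $\mu$ is a standard basis vector, and therefore $x$ is a vertex. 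This direction does not use IDP.

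\emph{Clean implies unimodular facets.} Assume $\Delta$ is clean. Fix a facet $\Delta_I$. By \cref{lem:faces}, $\Delta_I$ is IDP with respect to its induced lattice. Every lattice point of $\Delta_I$ is a boundary point of $\Delta$, so by cleanness it is a vertex. The second assertion of \cref{lem:initial}, applied to the IDP simplex $\Delta_I$, then shows $\Delta_I$ is unimodular. When $d=1$ the facets are points, which are unimodular by convention, so this case is trivial. Alternatively, one can argue inside $\Lambda_\Delta$: a nonzero $\lambda$ supported on $I$ decomposes by \cref{lem:nocarry} into height-one elements $\mu^{(i)}$ with $\mu^{(i)}_j\le\lambda_j$. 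Each $\mu^{(i)}$ is then supported in $I$ and gives a non-vertex lattice point in $\Delta_I$, which contradicts cleanness. I expect this step to be the main obstacle, because it is the only place where IDP is essential. Here the no-carry form of \cref{lem:nocarry}, namely equality in $\mathbb{R}^{d+1}$, is exactly what keeps the summands supported on $I$.

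\emph{Unimodality.} If $\Delta$ is clean, every facet is unimodular by the equivalence. By \cref{prop:equivalent} ((2)$\Rightarrow$(1)), every nonzero element of $\Lambda_\Delta$ has full support. \cref{thm:fullsupport} then gives the unimodality of the $h^*$-vector of $\Delta$.
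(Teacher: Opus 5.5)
Your proof is correct and follows the paper's argument. Clean $\Rightarrow$ unimodular facets goes through \cref{lem:faces} and the second part of \cref{lem:initial}, and unimodality through \cref{prop:equivalent} ((2)$\Rightarrow$(1)) and \cref{thm:fullsupport}. The only difference is cosmetic: for the converse you show that the fractional part of the barycentric coordinates lies in the trivial group $\Lambda_{\Delta_I}$, whereas the paper reads off $|\Delta_I\cap M|=\dim\Delta_I+1$ from $h_1^*(\Delta_I)=0$ via \eqref{eq:basic}.
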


\begin{proof}
Suppose first that $\Delta$ is clean. By definition, the relative boundary of $\Delta$ contains no lattice points other than its vertices. Consequently, every facet of $\Delta$ contains exactly its vertices as lattice points. By \cref{lem:faces}, every facet naturally inherits the IDP from $\Delta$. Applying \cref{lem:initial} to these facets, we conclude that they must be unimodular.

Conversely, suppose that every facet of $\Delta$ is unimodular. For any unimodular simplex, the associated coordinate group is trivial, and formula \eqref{eq:basic} establishes that its total number of lattice points is exactly one greater than its dimension, which precisely equals its number of vertices. Thus, a unimodular facet contains no lattice points other than its vertices. Because the boundary of $\Delta$ is the union of its facets, it follows that every boundary lattice point of $\Delta$ is a vertex, proving that $\Delta$ is clean.

Finally, if $\Delta$ is a clean IDP simplex, the preceding argument guarantees that every facet of $\Delta$ is unimodular. By the equivalence established in \cref{prop:equivalent}, this ensures that every nonzero element of the coordinate group $\Lambda_\Delta$ has full support. By \cref{thm:fullsupport}, we conclude that the $h^*$-vector of $\Delta$ is unimodal.
\end{proof}

\begin{corollary}\label{cor:endpoint}
Let $P$ be a $d$-dimensional lattice polytope, with $d \geq 1$, possessing the integer decomposition property (IDP). If the coefficients of its $h^*$-polynomial satisfy $h_1^*(P) = h_d^*(P)$, then $P$ is a clean simplex, and its $h^*$-vector is unimodal.
\end{corollary}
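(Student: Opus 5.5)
We must show: IDP polytope $P$ of dimension $d\ge1$ with $h_1^*(P)=h_d^*(P)$ is a clean simplex, and its $h^*$-vector is unimodal. The plan is to compare the lattice points in the interior of $P$ with all lattice points of $P$.

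By \eqref{eq:basic} and \eqref{eq:last},
\[
h_1^*(P)=|P\cap M|-d-1,\qquad h_d^*(P)=|\relint(P)\cap M| .
\]
Write $|P\cap M| = |\relint(P)\cap M| + |\partial P\cap M|$. The hypothesis $h_1^*=h_d^*$ then becomes
\[
|\partial P\cap M| = d+1 .
\]
The boundary contains every vertex of $P$, and a $d$-dimensional polytope has at least $d+1$ vertices. Hence $P$ has exactly $d+1$ vertices, so it is a simplex. Moreover, the vertices are its only boundary lattice points, so $P$ is clean by definition.

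Since $P$ is a clean IDP simplex of positive dimension, \cref{cor:clean} applies directly and gives unimodality of the $h^*$-vector. Equivalently, \cref{cor:clean} shows every facet is unimodular; then \cref{prop:equivalent} gives full support of nonzero elements of $\Lambda_P$, and \cref{thm:fullsupport} finishes.

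The only point needing care is the case $d=1$. There the boundary consists of the two endpoints, and the argument above is still valid. Note that \eqref{eq:last} requires $d\ge1$, which is assumed, so no step is a genuine obstacle.
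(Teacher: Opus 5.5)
Your proposal is correct and follows the paper's proof essentially step for step. You combine \eqref{eq:basic} and \eqref{eq:last} to get $|\partial P\cap M| = d+1+h_1^*(P)-h_d^*(P) = d+1$, conclude by counting vertices that $P$ is a clean simplex, and then invoke \cref{cor:clean}; your sandwich of the vertex count between $d+1$ and $|\partial P\cap M| = d+1$ is a slightly more explicit version of the paper's vertex-counting step.
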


\begin{proof}
By Equations \eqref{eq:basic} and \eqref{eq:last}, the number of lattice points on the relative boundary of $P$ can be explicitly computed as
\[|\partial P \cap M| = d + 1 + h_1^*(P) - h_d^*(P).
\]
Under the hypothesis that $h_1^*(P) = h_d^*(P)$, this enumeration simplifies precisely to $|\partial P \cap M| = d + 1$. 

The equality $|\partial P \cap M| = d + 1$ thus forces $P$ to have exactly $d + 1$ vertices, which constitute the entirety of the lattice points on its relative boundary. Having exactly $d + 1$ affinely independent vertices implies that $P$ is a simplex. Since its relative boundary contains no lattice points other than these vertices, $P$ is, by definition, a clean simplex. 

Because $P$ is a clean simplex that inherits the IDP, \cref{cor:clean} directly guarantees that its $h^*$-vector is unimodal.
\end{proof}






\noindent
{\small \textbf{Acknowledgments:}}
The authors would like to express sincere gratitude for all the suggestions that have improved the presentation of this paper.
Feihu Liu was partially supported by the Postdoctoral Fellowship Program and China Postdoctoral Science Foundation (Grant No. BX2026002).

\noindent{\small \textbf{Declaration of AI Assistance:}}

During the preparation of this manuscript, the authors utilized ChatGPT for language polishing and grammar checking to improve the readability and clarity of the text. All core reasoning and key conclusions were independently completed by the authors. The use of ChatGPT did not involve the substantive generation or creative contribution to the research content. 
In the final stage, we also used ChatGPT to verify the correctness of the manuscript.
The authors takes full academic responsibility for the entire manuscript.



\end{document}